\def\@captype{figure}
 \newcounter{enunciato}[section]
 \newtheorem{itassumption}{Assumption}
 \newtheorem{ittheorem}{Theorem}
 \newtheorem{itcorollary}{Corollary}
 \newtheorem{itlemma}{Lemma}
 \newtheorem{itproposition}{Proposition}
 \newtheorem{itdefinition}{Definition}
 \newtheorem{itremark}{Remark}
 \newtheorem{itclaim}{Claim}
 \newtheorem{itfact}{Fact}
 \newtheorem{itconjecture}{Conjecture}
 \newtheorem{itexample}{Example}
 \newenvironment{theorem}{\addtocounter{enunciato}{1}
 \begin{ittheorem}}{\end{ittheorem}}
 \newenvironment{corollary}{\addtocounter{enunciato}{1}
 \begin{itcorollary}}{\end{itcorollary}}
 \newenvironment{lemma}{\addtocounter{enunciato}{1}
 \begin{itlemma}}{\end{itlemma}}
 \newenvironment{proposition}{\addtocounter{enunciato}{1}
 \begin{itproposition}}{\end{itproposition}}
 \newenvironment{definition}{\addtocounter{enunciato}{1}
 \begin{itdefinition}}{\end{itdefinition}}
 \newenvironment{remark}{\addtocounter{enunciato}{1}
 \begin{itremark}}{\end{itremark}}
 \newenvironment{claim}{\addtocounter{enunciato}{1}
 \begin{itclaim}}{\end{itclaim}}
 \newenvironment{fact}{\addtocounter{enunciato}{1}
 \begin{itfact}}{\end{itfact}}
 \newenvironment{conjecture}{\addtocounter{enunciato}{1}
 \begin{itconjecture}}{\end{itconjecture}}
 \newenvironment{example}{\addtocounter{enunciato}{1}
 \begin{itexample}}{\end{itexample}}
 \newcommand{\be}[1]{\begin{equation}\label{#1}}
 \newcommand{\ee}{\end{equation}}
 \newcommand{\bl}[1]{\begin{lemma}\label{#1}}
 \newcommand{\el}{\end{lemma}}
 \newcommand{\br}[1]{\begin{remark}\label{#1}}
 \newcommand{\er}{\end{remark}}
 \newcommand{\bt}[1]{\begin{theorem}\label{#1}}
 \newcommand{\et}{\end{theorem}}
 \newcommand{\bd}[1]{\begin{definition}\label{#1}}
 \newcommand{\ed}{\end{definition}}
 \newcommand{\bcl}[1]{\begin{claim}\label{#1}}
 \newcommand{\ecl}{\end{claim}}
 \newcommand{\bfact}[1]{\begin{fact}\label{#1}}
 \newcommand{\efact}{\end{fact}}
 \newcommand{\bp}[1]{\begin{proposition}\label{#1}}
 \newcommand{\ep}{\end{proposition}}
 \newcommand{\bc}[1]{\begin{corollary}\label{#1}}
 \newcommand{\ec}{\end{corollary}}
 \newcommand{\bcj}[1]{\begin{conjecture}\label{#1}}
 \newcommand{\ecj}{\end{conjecture}}
 \newcommand{\bex}[1]{\begin{example}\label{#1}}
 \newcommand{\eex}{\end{example}}
 \newcommand{\bpr}{\begin{proof}}
 \newcommand{\epr}{\end{proof}}
 \newcommand{\bprt}[1]{\begin{proofoft}{\it\ref{#1}}.\,\,}
 \newcommand{\eprt}{\end{proofoft}}
 \newcommand{\bprc}[1]{\begin{proofofc}{\it\ref{#1}}.\,\,}
 \newcommand{\eprc}{\end{proofofc}}
 \newcommand{\bprp}[1]{\begin{proofofp}{\it\ref{#1}}.\,\,}
 \newcommand{\eprp}{\end{proofofp}}
 \newcommand{\bprl}[1]{\begin{proofofl}{\it\ref{#1}}.\,\,}
 \newcommand{\eprl}{\end{proofofl}}
 \newcommand{\bi}{\begin{itemize}}
 \newcommand{\ei}{\end{itemize}}
 \newcommand{\ben}{\begin{enumerate}}
 \newcommand{\een}{\end{enumerate}}
\newenvironment{proofoft}{\noindent 
{\it Proof of Theorem\,}}{\hspace*{\fill}$\halmos$\medskip}
\newenvironment{proofofc}{\noindent 
{\it Proof of Corollary\,}}{\hspace*{\fill}$\halmos$\medskip}
\newenvironment{proofofp}{\noindent 
{\it Proof of Proposition\,}}{\hspace*{\fill}$\halmos$\medskip}
\newenvironment{proofofl}{\noindent 
{\it Proof of Lemma\,}}{\hspace*{\fill}$\halmos$\medskip}
\newcommand{\halmos}{\rule{1ex}{1.4ex}}
\def \PP {{\mathbb P}}
\def \EE {{\mathbb E}}
\def \Sp {{\rm Sp}}
\def \Supp {{\rm Supp}}
\def \E {{\mathbb E}}
\def \N {{\mathbb N}}
\def \P {{\mathbb P}}
\def \Q {{\mathbb Q}}
\def \R {{\mathbb R}}
\def \Z {{\mathbb Z}}
\def \cE {{\mathcal E}}
\def \cK {{\mathcal K}}
\def \cH {{\mathcal H}}
\def \cL {{\mathcal L}}
\def \cM {{\mathcal M}}
\def \cF {{\mathcal F}}
\def \WN {{\hbox{\tiny\rm STWN}}}
\def \FSRW {{\hbox{\tiny\rm FIRW}}}
\def \ISRW {{\hbox{\tiny\rm IIRW}}}
\def \Sp {{\hbox{\tiny\rm SFS}}}
\newcommand{\one}{{\mathchoice {1\mskip-4mu\mathrm l}
{1\mskip-4mu\mathrm l}
{1\mskip-4.5mu\mathrm l}
{1\mskip-5mu\mathrm l}}}
\newcommand{\esssup}{\mathrm{ess\,sup}}
\newcommand{\essinf}{\mathrm{ess\,inf}}
\begin{document}

\title{Parabolic Anderson model in a dynamic random environment:\\
random conductances}

\author{\renewcommand{\thefootnote}{\arabic{footnote}}
D.\ Erhard
\footnotemark[1]
\\
\renewcommand{\thefootnote}{\arabic{footnote}}
F.\ den Hollander
\footnotemark[2]
\\
\renewcommand{\thefootnote}{\arabic{footnote}}
G.\ Maillard
\footnotemark[3]
}

\footnotetext[1]{
Mathematics Department, University of Warwick,
Coventry, CV4 7AL, United Kingdom,\\
{\sl D.Erhard@warwick.ac.uk}
}
\footnotetext[2]{
Mathematical Institute, Leiden University, P.O.\ Box 9512,
2300 RA Leiden, The Netherlands,\\
{\sl denholla@math.leidenuniv.nl}
}
\footnotetext[3]{
Aix-Marseille Universit\'e, CNRS, Centrale Marseille, I2M, 
UMR 7373, 13453 Marseille, France,\\
{\sl gregory.maillard@univ-amu.fr}
}

\date{\today}
\maketitle


\begin{abstract}
The parabolic Anderson model is defined as the partial differential equation $\partial 
u(x,t)/\partial t$ $= \kappa\Delta u(x,t) + \xi(x,t)u(x,t)$, $x\in\Z^d$, $t\geq 0$, where $\kappa 
\in [0,\infty)$ is the diffusion constant, $\Delta$ is the discrete Laplacian, and $\xi$ is 
a \emph{dynamic random environment} that drives the equation. The initial condition 
$u(x,0)=u_0(x)$, $x\in\Z^d$, is typically taken to be non-negative and bounded. The 
solution of the parabolic Anderson equation describes the evolution of a field of 
particles performing independent simple random walks with binary branching: particles 
jump at rate $2d\kappa$, split into two at rate $\xi \vee 0$, and die at rate $(-\xi) \vee 0$. 
In earlier work we looked at the \emph{Lyapunov exponents}
\[
\lambda_p(\kappa) = \lim_{t\to\infty} \frac{1}{t} \log \E([u(0,t)]^p)^{1/p}, \quad p \in \N,  
\qquad \lambda_0(\kappa) = \lim_{t\to\infty} \frac{1}{t}\log u(0,t).
\]
For the former we derived \emph{quantitative} results on the $\kappa$-dependence for four 
choices of $\xi$: space-time white noise, independent simple random walks, the exclusion 
process and the voter model. For the latter we obtained \emph{qualitative} results under 
certain space-time mixing conditions on $\xi$.

In the present paper we investigate what happens when $\kappa\Delta$ is replaced 
by $\Delta^\cK$, where $\cK = \{\mathcal{K}(x,y)\colon\,x,y\in\Z^d,\,x \sim y\}$ is a collection of random
conductances between neighbouring sites replacing the constant conductances $\kappa$ 
in the homogeneous model. We show that the associated \emph{annealed} Lyapunov 
exponents $\lambda_p(\cK)$, $p\in\N$, are given by the formula
\[
\lambda_p(\cK) = \sup\{\lambda_p(\kappa) \colon\,\kappa\in\Supp(\cK)\}, 
\]
where $\Supp(\cK)$ is the set of values taken by the $\cK$-field. We also show that for 
the associated \emph{quenched} Lyapunov exponent $\lambda_0(\cK)$ this formula 
only provides a lower bound, and we conjecture that an upper bound holds when $\Supp(\cK)$ 
is replaced by its convex hull. Our proof is valid for three classes of reversible $\xi$, and for 
all $\cK$ satisfying a certain \emph{clustering property}, namely, there are arbitrarily large 
balls where $\cK$ is almost constant and close to any value in $\Supp(\cK)$. What our result 
says is that the Lyapunov exponents are controlled by those pockets of $\cK$ where the 
conductances are close to the value that maximises the growth in the homogeneous setting. 
Our proof is based on variational representations and confinement arguments showing that mixed 
pockets are subdominant. 

\medskip\noindent
{\it MSC 2010.} Primary 60K35, 60H25, 82C44; Secondary 35B40, 60F10.\\
{\it Key words and phrases.} Parabolic Anderson equation, random conductances, Lyapunov 
exponents, large deviations, variational representations, confinement.\\
{\it Acknowledgment.} The authors were supported by ERC Advanced Grant 267356 
VARIS of FdH. DE was also supported by ERC Consolidator Grant 615897 CRITICAL of 
Martin Hairer. GM is grateful to the Mathematical Institute of Leiden University for
hospitality during sabbatical leaves in May-July 2014 and in June-July 2015.
\end{abstract}


\section{Introduction and main results}
\label{S1}

Random walks with random conductances have been studied intensively in the literature.
For a recent overview, we refer the reader to Biskup~\cite{B11}. The goal of the present paper 
is to study the version of the Parabolic Anderson model where the underlying random walk 
is driven by random conductances, and to investigate the effect on the Lyapunov exponents.  


\subsection{Parabolic Anderson model with random conductances}
\label{S1.1}

The parabolic Anderson model with random conductances is the partial differential equation
\begin{equation}
\label{pA}
\left\{
\begin{array}{l}
\displaystyle\frac{\partial}{\partial t}u(x,t) = (\Delta^\cK u)(x,t) + \xi(x,t)u(x,t),\bigg.\\
u(x,0) = u_0(x),
\end{array}
\qquad x\in\Z^d,\,t\geq 0\, .
\right.
\end{equation}
Here, $u$ is an $\R$-valued random field, $\Delta^\cK$ is the discrete Laplacian with 
random conductances $\cK$ acting on $u$ as
\begin{equation}
\label{dL}
\Delta^\cK u(x,t) = \sum_{{y\in\Z^d} \atop {y \sim x}} \cK(x,y) [u(y,t)-u(x,t)],
\end{equation}
where $\{\cK(x,y)\colon x,y\in\Z^d, x \sim y\}$ is a $(0,\infty)$-valued field of \emph{random
conductances}, $x \sim y$ means that $x$ and $y$ are neighbours, while
\begin{equation}
\label{rf}
\xi = (\xi_t)_{t \geq 0} \mbox{ with } \xi_t = \{\xi(x,t) \colon\,x\in\Z^d\}
\end{equation}
is an $\R$-valued random field playing the role of a \emph{dynamic random environment}
that drives the equation. Throughout the paper we assume that 
\begin{equation}
0 \leq u_0(x) \leq 1 \qquad \forall\,x \in \Z^d.
\end{equation}

The $\xi$-field and the $\cK$-field are defined on probability spaces $(\Omega,\cF,\PP)$ 
and $(\tilde\Omega,\tilde\cF,\tilde\PP)$, respectively. Throughout the paper we assume 
that 
\begin{equation}
\label{staterg}
\begin{array}{ll}
{\rm (1)} &0<c \leq \cK(x,y) \leq C<\infty\,\,\forall\,x,y\in\Z^d,\,x \sim y.\\
{\rm (2)} &\cK(x,y)=\cK(y,x)\,\,\forall\,x,y\in\Z^d,\,x \sim y.
\end{array}
\end{equation}
The formal solution of \eqref{pA} is given by the \emph{Feynman-Kac formula}
\begin{equation}
\label{FK}
u(x,t) = E_x\left(\exp\left\{\int_0^t \xi\big(X^\cK(s),t-s\big)\,ds\right\}\,
u_0(X^\cK(t))\right),
\end{equation}
where $X^\cK=(X^\cK(t))_{t \geq 0}$ is the continuous-time Markov process with generator 
$\Delta^\cK$, and $P_x$ is the law of $X^\cK$ given $X^\cK(0)=x$. When $\cK \equiv \kappa
\in (0,\infty)$, we write $X^\cK = X^\kappa$. In Section~\ref{S1.3} we will show that under 
mild assumptions on $\xi$ the formula in \eqref{FK} is the unique non-negative solution of 
\eqref{pA}. These assumptions are fulfilled for the three classes of $\xi$ that will receive special 
attention in our paper, which we list next.


\subsection{Choices of dynamic random environments} 
\label{S1.2}

{\bf (I) Space-time white noise:}
Here $\xi$ is the Markov process on $\Omega = \R^{\Z^d}$  given by
\begin{equation}
\label{eq:wn}
\xi(x,t) = \frac{\partial}{\partial t} W(x,t),
\end{equation}
where $W=(W_t)_{t \geq 0}$ with $W_t = \{W(x,t)\colon\,x\in\Z^d\}$ is a field of independent 
standard Brownian motions, and \eqref{pA} is to be understood as an It\^{o}-equation.

\medskip\noindent
{\bf (II) Independent random walks:}

\medskip\noindent
{\bf (IIa) Finite system:}
Here $\xi$ is the Markov process on $\Omega = \{0,\ldots,n\}^{\Z^d}$ given by
\begin{equation}
\label{eq:fisrw}
\xi(x,t) = \sum_{k=1}^{n}\delta_x(Y_k^{\rho}(t)),
\end{equation}
where $\{Y_k^{\rho}\colon\,1\leq k\leq n\}$ is a collection of $n\in\N$ independent continuous-time
simple random walks jumping at rate $2d\rho$ and starting at the origin.

\medskip\noindent
{\bf (IIb) Infinite system:}
Here $\xi$ is the Markov process on $\Omega = \N_0^{\Z^d}$ given by
\begin{equation}
\label{eq:isrw}
\xi(x,t) = \sum_{y\in\Z^d}\sum_{j=1}^{N_y} \delta_{x}(Y_j^{y}(t)),
\end{equation}
where $\{Y_j^{y}\colon\,y\in\Z^d,1\leq j\leq N_y, Y_j^{y}(0)=y\}$ is an infinite collection of 
independent continuous-time simple random walks jumping at rate $2d$, and $(N_y)_{y\in\Z^d}$ 
is a Poisson random field with intensity $\nu\in(0,\infty)$. The generator $L$ of this process is 
defined as follows (see Andjel~\cite{A82}). Let $l(x)=e^{-\|x\|}$, $x\in\Z^d$, with $\|\cdot\|$
the Euclidean norm. Define the $l$-norm on $\Omega$ as
\begin{equation}
\label{eq:norml}
\|\eta\|_l = \sum_{x\in\Z^d} \eta(x)l(x),
\end{equation}
and define the sets $\mathcal{E}_l= \{\eta\in\Omega\colon\, \|\eta\|_l<\infty\}$ and $\mathcal{L}_l
=\{f\colon\,\cE_l\to\R \mbox{ Lipschitz continuous}\}$. Then $L$ acts on $f\in\cL_l$ as
\begin{equation}
\label{eq:genISRW}
(Lf)(\eta) = \sum_{x\in\Z^d} \sum_{ {y\in\Z^d} \atop {y \sim x} }
\eta(x)[f(\eta^{x,y})-f(\eta)],
\end{equation}
and $\eta^{x,y}$ is defined by
\begin{equation}
\label{eq:etaxy}
\eta^{x,y}(z) = 
\left\{\begin{array}{ll}
\eta(z),\, &z\neq x,y,\\
\eta(x)-1,\, &z=x,\\
\eta(y)+1,\, &z=y.
\end{array}
\right.
\end{equation}
Write $\mu$ for the Poisson random field with intensity $\nu$. This is the invariant
distribution of the dynamics.

\medskip\noindent
{\bf (III) Spin-flip systems:}
Here $\xi$ is the Markov process on $\Omega=\{0,1\}^{\Z^d}$ whose generator $L$ acts 
on cylinder functions $f$ as (see Liggett \cite[Chapter III]{L85})
\be{Lss}
(Lf)(\eta) = \sum_{x\in\Z^d} c(x,\eta)[f(\eta^{x})-f(\eta)],
\ee 
where, for a configuration $\eta$, $c(x,\eta)$ is the rate for the spin at $x$ to flip, and
\begin{equation}
\label{eq:etax}
\eta^x(z) =
\left\{\begin{array}{ll}
\eta(z),\, &z \neq x,\\ 
1-\eta(x),\, &z=x.
\end{array}
\right.
\end{equation}
We assume that the rates $c(x,\eta)$ are such that
\begin{itemize}
\item[(i)] 
$\xi$ is ergodic and \emph{reversible}, i.e., there is a probability distribution $\mu$ on $\Omega$
such that $\xi_t$ converges to $\mu$ in distribution as $t\to\infty$ for any choice of $\xi_0 \in 
\Omega$, and $c(x,\eta)\mu(d\eta) = c(x,\eta^x)\mu(d\eta^x)$ for all $\eta\in\Omega$ and 
$x \in \Z^d$.
\item[(ii)] 
$\xi$ is \emph{attractive}, i.e., $c(x,\eta) \leq c(x,\zeta)$ for all $\eta \leq \zeta$ when 
$\eta(x) = \zeta(x)=0$ and $c(x,\eta) \geq c(x,\zeta)$ for all $\eta \leq \zeta$ when 
$\eta(x) = \zeta(x)=1$ (where we write $\eta \leq \zeta$ when $\eta(x) \leq \zeta(x)$ for 
all $x\in\Z^d$).
\end{itemize}
We further assume that
\begin{itemize}
\item[(iii)] $\xi_0$ has distribution $\mu$.
\end{itemize}
Let  $\cM$ be the class of continuous non-decreasing functions $f$ on $\Omega$, the 
latter meaning that $f(\eta) \leq f(\zeta)$ for all $\eta \leq \zeta$. As shown in 
Liggett~\cite[Theorems II.2.14 and III.2.13]{L85}, attractive spin-flip systems preserve the 
FKG-inequality, i.e., if $\xi_0$ satisfies the FKG-inequality (e.g. if $\xi_0$ is distributed according to $\mu$), then so does $\xi_t$ for all $t\geq 0$, i.e., 
\begin{equation}
\label{eq:poscor}
\EE(f(\xi_t)g(\xi_t)) \geq \EE(f(\xi_t))\,\EE(g(\xi_t)) \quad \forall\,\,f,g\in \cM.
\end{equation}
Examples include the ferromagnetic stochastic Ising model, for which
\begin{equation}
\label{eq:sim}
c(x,\eta)= \exp\left[-\beta \sum_{ {y \in \Z^d} \atop {y \sim x} } \sigma(x)\sigma(y)\right],
\qquad \sigma(x) = 2\eta(x)-1 \in \{-1,+1\},
\end{equation}
with $\beta \in (0,\infty)$ the inverse temperature. This dynamics has at least one invariant 
distribution. It is shown in Liggett~\cite[Theorem IV.2.3 and Proposition IV.2.7]{L85} 
that any reversible spin-flip system is a stochastic Ising model for some interaction 
potential (not necessarily between neighbours).


\subsection{Lyapunov exponents}
\label{S1.3}

Our focus will be on the \emph{annealed Lyapunov exponents}
\be{aLyapdef}
\lambda_p(\cK) = \lim_{t\to\infty} \frac{1}{t} \log\EE\big([u(0,t)]^p\big)^{1/p},
\qquad p \in \N,
\ee
and the \emph{quenched Lyapunov exponent}
\be{qLyapdef}
\lambda_0(\cK) = \lim_{t\to\infty} \frac{1}{t} \log u(0,t),
\ee
provided the limits exist. Note that 
\begin{itemize}
\item[$\blacktriangleright$] $\cK$ is \emph{fixed}, i.e., the annealing and the 
quenching is with respect to $\xi$ only. 
\end{itemize}
We write $\lambda_p(\kappa)$ when $\cK\equiv \kappa$.

Let $\cE^d$ be the edge set of $\Z^d$, and let $\mathrm{Supp}(\cK) = \{\cK(x,y)\colon\,(x,y)
\in\cE^d\}$. For $x\in\Z^d$ and $t>0$, let 
\be{boxdef}
B_{t}(x) = x + ([-t,t]^d \cap \cE^d)
\ee 
be the edges in the box of radius $t$ centered at $x$.

\begin{definition}
\label{def:clustering}
We say that $\cK$ has the clustering property when for all $\kappa \in \Supp(\cK)$, $\delta>0$ 
and $t>0$ there exist radii $L_{\delta,\kappa}(t)$, satisfying $\lim_{t\to\infty} L_{\delta,\kappa}(t) 
= \infty$, and centers $x(\kappa,\delta,t) \in \Z^d$, satisfying $\lim_{t\to\infty} \|x(\kappa,\delta,t)\|/t 
= 0$, such that $\cK(y,z) \in (\kappa-\delta,\kappa+\delta) \cap \Supp(\cK)$ for all $(y,z)
\in B_{L_{\delta,\kappa}(t)}(x(\kappa,\delta,t))$.
\end{definition}

\noindent
For the binary case $\Supp(\cK)=\{\kappa_1,\kappa_2\}$, the clustering property states 
that there are two sequences of boxes $B^1(t)$ and $B^2(t)$, whose sizes tend to infinity 
and whose distances to the origin are $o(t)$, such that $\cK(x,y) = \kappa_1$ for all $(x,y) 
\in B^1(t)$ and $\cK(x,y)=\kappa_2$ for all $(x,y)\in B^2(t)$. Note that if $\cK$ is i.i.d., then 
it has the clustering property with probability $1$.

Our main result for the annealed Lyapunov exponents is the following. 

\bt{th:ann}
Let $\xi$ be as in {\rm {\bf (I)--(III)}}, and let $\cK$ have the clustering property. Then for 
all $p\in\N$ the limit in \eqref{aLyapdef} exists and equals
\be{annequal}
\lambda_p(\cK) = \sup\{\lambda_{p}(\kappa)\colon \kappa\in \Supp(\cK)\},
\qquad p \in \N.
\ee
This equality holds irrespective of whether the right-hand side is finite or infinite. Moreover, 
$\lambda_p(\cK)$ is continuous, non-decreasing and convex in each of the components 
of $\cK$ on any open domain where it is finite.  
\et

To obtain a similar result for the quenched Lyapunov exponent, we need to make a different 
set of assumptions on $\xi$: 
\begin{itemize}
\item[(1)] 
$\xi$ is \emph{stationary} and \emph{ergodic} under translations in space and time.
\item[(2)] 
$\xi$ is \emph{not constant} and  $\E(|\xi(0,0)|)<\infty$.
\item[(3)] 
$s \mapsto \xi(x,s)$ is locally integrable for every $x\in\Z^d$, $\xi$-a.s.
\item[(4)] 
$\E(e^{q\xi(0,0)})<\infty$ for all $q \in \R$.
\end{itemize}
As a consequence of Assumptions (1)--(4), \eqref{pA} has a unique non-negative solution 
given by \eqref{FK} (see Erhard, den Hollander and Maillard~\cite{EdHM12}). The dynamics 
in {\bf (I)--(III)} satisfy (1)--(4). More examples may be found in \cite[Corollary 1.19]{EdHM12}. 

\bt{th:que}
Suppose that $u(x,0) = \delta_0(x)$. Let $\xi$ satisfy {\rm (1)--(4)}, and let $\cK$ have the 
clustering property. Then the limit in \eqref{qLyapdef} exists~$\P$-a.s. and in $\P$-mean 
and satisfies
\be{queequal}
\lambda_0(\cK) \geq \sup\{\lambda_{0}(\kappa)\colon \kappa\in \Supp(\cK)\}. 
\ee
This inequality holds irrespective of whether the right-hand side is finite or infinite. 
\et


\subsection{Discussion and outline}
\label{S1.4}

{\bf 1.}
Theorem \ref{th:ann} shows that, in the annealed setting, the clustering strategy wins over 
the non-clustering strategy, i.e., the annealed Lyapunov exponents are controlled by those 
pockets in $\cK$ where the conductances are close to the value that maximises the growth 
in the homogeneous setting, i.e., mixed pockets in $\cK$ are subdominant. For the quenched
Lyapunov exponent this is not expected to be the case. For the annealed Lyapunov exponents we can use 
variational representations, for the quenched Lyapunov exponent the argument is more delicate.
 
\medskip\noindent  
{\bf 2.}
Examples {\bf (I)} and {\bf (III)} are \emph{non-conservative} dynamics. Examples {\bf (IIa)--(IIb)} 
are \emph{conservative} dynamics.  All are \emph{reversible}.
  
\medskip\noindent  
{\bf 3.} 
For $\cK \equiv \kappa$, the annealed Lyapunov exponents $\lambda_p(\kappa)$, $p\in\N$, 
are known to be continuous, non-increasing and convex in $\kappa$ when finite, for each of 
the choices in {\bf (I)--(III)}. Hence \eqref{annequal} reduces to
\be{annequalalt}
\lambda_p(\cK) = \lambda_{p}(\kappa_*),
\qquad \kappa_* = \essinf [\Supp(\cK)],
\qquad p \in \N,
\ee
i.e., the annealed growth is dominated by the pockets with the \emph{slowest} conductances. 

\medskip\noindent
{\bf 4.} 
The quenched Lyapunov exponent $\lambda_0(\kappa)$ is continuous in $\kappa$ as well, 
but it fails to be non-increasing (it is expected to be unimodal). Hence we do not expect the 
inequality in \eqref{queequal} to be an equality, as in the annealed case. In Section~\ref{S5} 
we provide an illustrative counterexample for a \emph{decorated} version of $\Z^d$, i.e., 
each pair of neighbouring sites of $\Z^d$ is connected by two edges rather than one, for 
which the inequality in \eqref{queequal} is strict. We conjecture that the following upper 
bound holds.

\bcj{conj:queub}
Under the conditions of Theorem~{\rm \ref{th:que}},
\be{}
\lambda_0(\cK) \leq \sup\{\lambda_{0}(\kappa)\colon \kappa\in \mathrm{Conv}(\Supp(\cK))\}, 
\ee
where $\mathrm{Conv}(\Supp(\cK))$ is the convex hull of $\Supp(\cK)$.
\ecj

\medskip\noindent
{\bf 5.}
The Feynman-Kac formula shows that understanding the Lyapunov exponents amounts to
understanding the large deviation behaviour of the integral of the $\xi$-field along the trajectory 
of a random walk in random environment. Drewitz~\cite{D08} studies the case where $\Delta$ 
is replaced by a Laplacian with a deterministic drift and $\xi$ is constant in time. It is proven that 
the Lyapunov exponent is maximal when the drift is zero.

\medskip\noindent
{\bf Outline.}
The outline of the remainder of the paper is as follows. 
In Section~\ref{S2} we derive variational formulas for the annealed Lyapunov exponents 
and use these to derive the rightmost inequality in \eqref{eq:upperboundchain}, i.e., $\leq$ 
in \eqref{annequal}. In Section~\ref{S3} we derive the leftmost inequality in 
\eqref{eq:upperboundchain}, i.e., $\geq$  in \eqref{annequal}. The proof uses a 
\emph{confinement approximation}, showing that the annealed Lyapunov exponent does 
not change when the random walk in the Feynman-Kac formula \eqref{FK} is confined to 
a slowly growing box. In Section~\ref{S4} we turn to the quenched Lyapunov exponent 
and prove the lower bound in Theorem~\ref{th:que} with the help of a confinement 
approximation. In Section~\ref{S5} we discuss the failure of the corresponding upper 
bound by providing a counterexample for a decorated lattice.

In Appendix~\ref{App} we show that the annealed Lyapunov exponents are the same for 
all initial conditions that are bounded. In Appendix~\ref{SB} we prove a technical lemma 
about the generator of dynamics {\bf (IIb)}.


\section{Annealed Lyapunov exponents: preparatory facts, variational representations, 
existence and upper bound}
\label{S2}

Section~\ref{S2.1} contains some preparatory facts.
Section~\ref{S2.2} gives variational representations for $\lambda_p(\cK)$ for each of the four 
dynamics (Propositions~\ref{prop:wn}--\ref{prop:revss} below) and settles the existence.
Section~\ref{S2.3} explains why these variational representations imply the upper bound. 
Section~\ref{S2.4} provides the proof of the variational representations.


\subsection{Preparatory facts}
\label{S2.1}

The following proposition, whose proof is deferred to Appendix \ref{App}, shows that 
the annealed Lyapunov exponents are the same for any bounded initial condition 
$u_0$, i.e., without loss of generality we may take $u_0=\delta_0$ or $u_0\equiv 1$.

\begin{proposition}
\label{prop:locin}
Fix $p\in\N$ and $\kappa>0$. Let $\xi$ be as in {\bf (I)-(III)}, and let 
$\lambda_p^{\delta_0}(\kappa)$ and $\lambda_p^{\one}
(\kappa)$ be the $p$-th annealed Lyapunov exponent for $u_0=\delta_0$ 
and $u_0 \equiv 1$, respectively. 
Then
\begin{equation}
\label{eq:loc}
\lambda_p^{\delta_0}(\kappa)= \lambda_p^{\one}(\kappa).
\end{equation}
\end{proposition}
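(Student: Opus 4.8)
The plan is to prove the two inequalities $\lambda_p^{\delta_0}(\kappa) \le \lambda_p^{\one}(\kappa)$ and $\lambda_p^{\one}(\kappa) \le \lambda_p^{\delta_0}(\kappa)$ separately, using the Feynman-Kac representation \eqref{FK} together with the fact that for $\cK\equiv\kappa$ the walk $X^\kappa$ has exponentially small probability of travelling too far in time $t$.

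For the lower bound $\lambda_p^{\one}(\kappa) \le \lambda_p^{\delta_0}(\kappa)$: since $u_0 \equiv 1 \ge \delta_0$ pointwise and the Feynman-Kac functional depends monotonically on $u_0$, one would like $u^{\one}(0,t) \ge u^{\delta_0}(0,t)$, which is immediate. Hence $\EE([u^{\delta_0}(0,t)]^p) \le \EE([u^{\one}(0,t)]^p)$ and taking $\tfrac1t\log(\cdot)^{1/p}$ gives $\lambda_p^{\delta_0}(\kappa) \le \lambda_p^{\one}(\kappa)$ once existence of both limits is granted (existence is part of the statement of Theorem \ref{th:ann}, or can be taken from the earlier companion work; in any case for this comparison one only needs $\limsup \le \liminf$ between the two, which follows from the pointwise bound on the prelimit quantities). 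So the easy direction is $\lambda_p^{\delta_0}(\kappa) \le \lambda_p^{\one}(\kappa)$.

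For the reverse direction $\lambda_p^{\one}(\kappa) \le \lambda_p^{\delta_0}(\kappa)$: the point is that $u^{\one}(0,t) = E_0\big(\exp\{\int_0^t \xi(X^\kappa(s),t-s)\,ds\}\big)$, and one splits the expectation over the endpoint $X^\kappa(t) = y$. Writing $u^{\delta_0}$ evaluated with a shifted starting point, by translation invariance of the dynamics of $\xi$ (stationarity in space) and reversibility of the random walk $X^\kappa$ one can relate $E_0(\,\cdot\,\one\{X^\kappa(t)=y\})$ to a Feynman-Kac expression started at $y$ with final indicator at $0$, i.e. to (a space-translate of) $u^{\delta_0}$. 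Concretely, $u^{\one}(0,t) = \sum_{y\in\Z^d} v_y(t)$ where $v_y(t)$ is a Feynman-Kac functional over paths from $0$ to $y$; using reversibility $p_t^\kappa(0,y)=p_t^\kappa(y,0)$ and the Markov property one rewrites $v_y(t)$ as $\tau_y u^{\delta_0}(0,t)$ in distribution, where $\tau_y$ is the spatial shift, so that $\EE([v_y(t)]^p) = \EE([u^{\delta_0}(0,t)]^p)$ for every $y$. Then by the triangle inequality in $L^p(\PP)$,
\[
\EE\big([u^{\one}(0,t)]^p\big)^{1/p} \;=\; \Big\|\sum_{y} v_y(t)\Big\|_p \;\le\; \sum_{y} \|v_y(t)\|_p \;=\; \Big(\sum_y \PP\text{-mass}\Big)\,\EE\big([u^{\delta_0}(0,t)]^p\big)^{1/p},
\]
and one must show the sum over $y$ contributes only subexponentially. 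This is where the Gaussian/heat-kernel tail bound on $X^\kappa$ enters: paths from $0$ reaching $\|y\| \gg t$ cost a factor decaying like $e^{-c\|y\|\log(\|y\|/t)}$, which, combined with the at-most-exponential growth of the exponential functional (Assumption (4), or boundedness of $\xi^+$ in cases (I)--(III) after truncation), makes $\sum_{\|y\|>R t} \|v_y(t)\|_p$ negligible, while the $\|y\| \le Rt$ part contributes a polynomial-in-$t$ number of equal terms. Taking $\tfrac1t\log$ kills the polynomial factor and yields $\lambda_p^{\one}(\kappa) \le \lambda_p^{\delta_0}(\kappa)$.

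The main obstacle is making the $L^p$-triangle-inequality step quantitatively clean in the presence of the (possibly unbounded, correlated) field $\xi$: one needs a uniform-in-$y$ control on $\|v_y(t)\|_p$ that both identifies it with $\EE([u^{\delta_0}(0,t)]^p)^{1/p}$ via translation/reversibility \emph{and} carries the spatial-tail decay from the walk. I expect one handles this by a Cauchy--Schwarz (or Hölder) split of $v_y(t)$ into the walk-indicator part $\one\{X^\kappa(t)=y\}$ (giving the heat-kernel tail $p_t^\kappa(0,y)$, which one bounds by standard sub-Gaussian estimates, e.g. Carne--Varopoulos or Davies' bound for conductance walks) and the exponential functional part (bounded in every $L^q$ uniformly in the starting point, using stationarity of $\xi$ and Assumption (4)). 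Once the tail sum is shown to be $e^{o(t)}$, the argument closes; the remaining bookkeeping — that the finitely many relevant shifts all give the same annealed moment by stationarity of $\PP$ — is routine.
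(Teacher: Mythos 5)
The easy direction $\lambda_p^{\delta_0}(\kappa)\leq\lambda_p^{\one}(\kappa)$ and the overall architecture of your converse (decompose over the terminal point $y$, kill $\|y\|\gg t$ with heat-kernel tails, count the surviving $y$'s polynomially) are sound, but the step that carries all the weight is false: reversibility of $X^\kappa$ together with spatial stationarity of $\xi$ does \emph{not} give $\EE([v_y(t)]^p)=\EE([u^{\delta_0}(0,t)]^p)$ for every $y$. What these symmetries give is that $v_y(t)$, the point-to-point functional over paths from $0$ to $y$, equals in distribution the point-to-point functional over paths from $y$ to $0$, i.e.\ from $0$ to $-y$ after shifting: a translation moves \emph{both} endpoints and cannot turn displacement $y$ into displacement $0$. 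The annealed moment of the displacement-$yt$ functional genuinely depends on the slope $y$; for space-time white noise, for instance, forcing the $p$ walks to travel ballistically to a common distant endpoint reduces their mutual intersection local time, so the pinned growth rate at nonzero slope is in general strictly \emph{smaller} than at slope zero, not equal to it. Your Minkowski/tail bookkeeping therefore only bounds $\lambda_p^{\one}(\kappa)$ by the supremum over slopes of the pinned exponents, and identifying that supremum with the slope-zero value $\lambda_p^{\delta_0}(\kappa)$ is precisely the content of the proposition, which your argument assumes rather than proves.

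This missing comparison between slopes is what the paper's Appendix~\ref{App} supplies, following Drewitz, G\"artner, Ramirez and Sun: one proves that $\tfrac1t\log\Xi^{\WN}_{y/t}(0,t)$ converges, uniformly on compacts, to a shape function $\alpha$ on $\R^{dp}$ which is \emph{concave} (via superadditivity in the spatial endpoint, obtained by forcing the walks through an intermediate point $yu$ at time $u$; for the correlated dynamics {\bf (III)} this factorization additionally requires attractiveness and the FKG inequality \eqref{eq:poscor}) and \emph{symmetric}, whence $\alpha(x)=\alpha(\tfrac12 x+\tfrac12(-x))\geq\tfrac12\alpha(x)+\tfrac12\alpha(-x)=\alpha(x)$ forces $\alpha(0)\geq\alpha(x)$ for all $x$. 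Some argument of this type comparing different displacements (spatial superadditivity plus symmetry, or a reflection/rearrangement substitute) is unavoidable; without it the proof does not close.
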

Consequently, the proof of Theorem~\ref{th:ann} reduces to the 
following two inequalities:
\begin{equation}
\label{eq:upperboundchain}
\begin{aligned}
\sup\{\lambda_p^{\delta_0}(\kappa)\colon\, \kappa\in\mathrm{Supp}(\mathcal{K})\}
\leq \lambda_p^{\delta_0}(\mathcal{K}),
\qquad
\lambda_p^{\one}(\mathcal{K})\leq \sup\{\lambda_p^{\one}(\kappa)\colon\, 
\kappa\in\mathrm{Supp}(\mathcal{K})\}.
\end{aligned}
\end{equation} 
We prove the second inequality (upper bound) in the present section and the 
first inequality (lower bound) in Section~\ref{S3}. For ease of notation we 
suppress the upper index from the respective Lyapunov exponents.

Before we proceed we make three observations:

\medskip\noindent
{\bf(I)} For $\xi$ space-time white noise, it follows from Carmona and 
Molchanov~\cite[Theorem II.3.2]{CM94} that 
\begin{equation}
\label{eq:pmwn}
\EE([u(0,t)]^p) = E_0^{\otimes p}
\Bigg(\exp\Bigg\{\sum_{1\leq i<j\leq p}
\int_0^{t} \one\{X_i^{\cK}(s)=X_j^{\cK}(s)\}\, ds\Bigg\}\prod_{i=1}^{p}
u_0(X_i^{\cK}(t))\Bigg),
\end{equation}
where $E_0^{\otimes p}$ is the expectation with respect to $p$ independent simple 
random walks $X_1^{\cK},\ldots, X_p^{\cK}$, all having generator $\Delta^{\cK}$ and 
all starting at $0$. 

\medskip\noindent
{\bf(IIa)} For $\xi$ finite independent simple random walks we have
\begin{equation}
\label{eq:pmfnirw}
\EE([u(0,t)]^p) = (E_0^{\otimes p}\otimes E_0^{\otimes n})
\Bigg(\exp\Bigg\{\sum_{i=1}^{p}\sum_{j=1}^{n}
\int_0^{t} \one\{X_i^{\cK}(s)=X_j^{\rho}(s)\}\, ds\Bigg\}\prod_{i=1}^{p}
u_0(X_i^{\cK}(t))\Bigg),
\end{equation}
which is similar to \eqref{eq:pmwn}. In particular, the proof of the upper bound in Theorem~\ref{th:ann} 
is similar for {\bf(I)} and {\bf(IIa)}. Therefore we will only give the proof for {\bf(IIa)}.

\medskip\noindent
{\bf(I)--(III)} are reversible, and so we have
\begin{equation}
\label{eq:timerev}
u(0,t) = E_0\bigg(\exp\bigg\{\int_0^t \xi(X^{\cK}(s),s)\, ds\bigg\} u_0(X^{\cK}(t))\bigg)
\end{equation}
in $\P$-distribution. 


\subsection{Variational representations}
\label{S2.2}

We assume that $u_{0}\equiv 1$. For $p\in\N$, $i\in\{1,\dots,p\}$, 
$x=(x_{1},\dots,x_{p}) \in \Z^{dp}$ and $y\in\Z^d$, write $f(x)|_{x_i\to y}$ 
to denote $f(x)$ but with the argument $x_i$ replaced by $y$. 

\bp{prop:wn}
Let $\xi$ be as in {\bf (I)}. Then, for all $p\in\N$,
\be{varwn}
\lambda_{p}(\cK)=\frac1p\sup_{\|f\|_{l^2(\Z^{dp})}=1} \{A_{1}(f)-A_{2}(f)\},
\ee
where
\be{Dirwn}
\begin{aligned}
A_1(f) &= \sum_{x\in\Z^{dp}}
\sum_{1\leq i<j\leq p} \delta_{0}(x_{i},x_{j}) f(x)^2,\\[0.3cm]
A_2(f) &=  \frac12 \sum_{x\in\Z^{dp}}
\sum_{i=1}^p \sum_{{z\in\Z^d} \atop {z \sim x_i}}
\cK(x_{i},z)\big[f(x)|_{x_i\to z}-f(x)\big]^2.
\end{aligned}
\ee
\ep

\bp{prop:fnirw}
Let $\xi$ be as in {\bf (IIa)}. Then, for all $p\in\N$,
\be{varfnirw}
\lambda_{p}(\cK)=\frac1p\sup_{\|f\|_{l^2(\Z^{dp} \times \Z^{dn})}=1} \{A_{1}(f)-A_{2}(f)-A_{3}(f)\},
\ee
where
\be{Dirfnirw}
\begin{aligned}
A_1(f) &= \sum_{x\in\Z^{dp}}\sum_{y\in\Z^{dn}}
\sum_{i=1}^{p}\sum_{j=1}^{n} \delta_{0}(x_{i},y_{j}) f(x,y)^2,\\[0.3cm]
A_2(f) &=  \frac12 \sum_{x\in\Z^{dp}}\sum_{y\in\Z^{dn}}
\sum_{i=1}^p \sum_{{z\in\Z^d} \atop {z \sim x_i}}
\cK(x_{i},z)\big[f(x,y)|_{x_i\to z}-f(x,y)\big]^2,\\[0.3cm]
A_3(f) &= \frac{\rho}{2} \sum_{x\in\Z^{dp}}\sum_{y\in\Z^{dn}}
\sum_{j=1}^n \sum_{{z\in\Z^d} \atop {z \sim y_j}}
\big[f(x,y)|_{y_j\to z}-f(x,y)\big]^2.
\end{aligned}
\ee
\ep

\bp{prop:infirw}
Fix $p\in\N$. Let $\xi$ be as in {\bf (IIb)} and let $G(0)$ be the Green function at the origin of simple random walk jumping at rate $2d$. Then, for all $0<p<1/G(0)$,
\begin{equation}
\label{eq:varISRW}
\lambda_p(\cK) = \frac{1}{p}\sup_{N\in\N}\,\,\sup_{\|f\|_{L^2(\mu\otimes m)=1}}
\left\langle \left(L+\sum_{i=1}^{p} \Delta_i^{\cK}+V_N\right)f,f\right\rangle,
\end{equation}
where
\be{Lpdef}
(\Delta_i^{\cK} f) (\eta,y) = \sum_{ {z\in\Z^d} \atop {z \sim y_i}}
\cK(y_i,z)\big[f(\eta,y)|_{y_i\to z}-f(\eta,y)\big],
\ee
$\mu = \otimes_{i\in\Z^d}\mathrm{POI}(\nu)$ is the Poisson random field with intensity $\nu \in 
(0,\infty)$, $m$ is the counting measure on $\Z^d$, and $V_N\colon\,\N_0^{\Z^d}\times \Z^{pd}\to\R$ 
is the truncated function given by 
\be{VNdef}
V_N(\eta,x) = \sum_{i=1}^{p} [N \wedge\eta(x_i)]
\ee
and $L$ acts on $f$ solely on its first coordinate.
\ep

\bp{prop:revss}
Let $\xi$ be as in {\bf (III)}. Then, for all $p\in\N$,
\be{varrevss}
\lambda_{p}(\cK)=\frac1p\sup_{\|f\|_{L^2(\mu\otimes m^p)}=1} \{A_{1}(f)-A_{2}(f)-A_{3}(f)\},
\ee
where $m^p$ is the counting measure on $\Z^{dp}$, and
\be{Dirrevss}
\begin{aligned}
A_1(f) &= \int_\Omega \mu(d\eta) \sum_{x\in\Z^{dp}}
\sum_{i=1}^{p} \eta(x_i)\,f(\eta,x)^2,\\[0.3cm]
A_2(f) &= \frac12 \int_\Omega \mu(d\eta) \sum_{x\in\Z^{dp}}
\sum_{i=1}^p \sum_{{y\in\Z^d} \atop {y \sim x_i}}
\cK(x_{i},y)\big[f(\eta,x)|_{x_i\to y}-f(\eta,x)\big]^2,\\[0.3cm]
A_3(f) &= \frac12 \int_\Omega \mu(d\eta) \sum_{x\in\Z^{dp}}
\sum_{y\in\Z^d}
c(y,\eta)\,\big[f(\eta^{y},x)-f(\eta,x)\big]^2.
\end{aligned}
\ee
\ep


\subsection{Proof of the upper bound in Theorem~\ref{th:ann}}
\label{S2.3} 

Let $\xi$ be as in {\bf (I)}, {\bf (IIa)}, {\bf (III)} or as in {\bf (IIb)} with $0<p<1/G(0)$.
By Propositions \ref{prop:wn}--\ref{prop:revss}, $\lambda_p(\cK)$ is a continuous, 
non-increasing and convex function of the components of $\cK$. Moreover,  
Propositions~\ref{prop:wn}--\ref{prop:revss} are still true when $\mathcal{K}=\kappa\in(0,\infty)$. 
It therefore follows that $\lambda_p(\cK) \leq \sup\{\lambda_p(\kappa)\colon\,\kappa\in\Supp(\cK)\}$. If $\xi$ is as in {\bf (IIb)} but with $p\geq 1/G(0)$, then by \cite[Theorem 1.4]{GdH06} the annealed Lyapunov exponents $\lambda_p(\kappa)$ are infinite for all $p\in\N$ and $\kappa\in[0,\infty)$. Hence, the upper bound in Theorem~\ref{th:ann} trivially holds in this case.

\subsection{Proof of Propositions~\ref{prop:wn}--\ref{prop:revss}}
\label{S2.4}

The proofs are, besides the proof of $\leq$ in~(\ref{eq:varISRW}), essentially 
straightforward extensions of the proofs of 
\cite[Lemma III.1.1]{CM94}, \cite[Proposition 2.1]{CGM12} and 
\cite[Proposition 2.2.2]{GdHM07} for $\cK \equiv \kappa \in(0,\infty)$.
We \emph{only indicate the main steps} (and so the arguments in this 
section are not self-contained). 
 
\subsubsection{Proof of Propositions~\ref{prop:wn},~\ref{prop:fnirw}~and~\ref{prop:revss}}
\label{S2.4.1}

\bpr
As mentioned in Section \ref{S2.1}, the Feynman-Kac formulas for the annealed 
Lyapunov exponents for white noise and finitely many independent random walks 
are similar, since the term 
\be{}
\sum_{1\leq i<j\leq p}
\int_0^{t} \one\{X_i^{\cK}(s)=X_j^{\cK}(s)\}\, ds
\ee
in (\ref{eq:pmwn}) for white noise is replaced by the term 
\be{}
\sum_{i=1}^{p}\sum_{j=1}^{n}
\int_0^{t} \one\{X_i^{\cK}(s)=X_j^{\rho}(s)\}\, ds.
\ee
in (\ref{eq:pmfnirw}) for finitely many independent random walks. Therefore a slight 
adaptation of the proof of Proposition \ref{prop:fnirw} below is enough to get the 
corresponding result for $\xi$ being space-time white noise, i.e., $\xi$ being as in {\bf(I)}.  

The proofs of Propositions~\ref{prop:fnirw},~and~\ref{prop:revss} follow the same line 
of argument as the proofs of \cite[Proposition 2.1]{CGM12} and 
\cite[Proposition 2.2.1]{GdHM07},  respectively, for $\cK \equiv \kappa$. Below we detail 
how to adapt the proofs. Consider the Markov process $Y=(Y(t))_{t \geq 0}$ with generator
\be{GKappa}
G_{V}^{\cK} = 
\begin{cases}
L_{1} + \sum_{i=1}^p \Delta_{i}^{\cK} + V_{1} \quad \text{ on } \ell^2(m^n\otimes m^p),
&\text{ if } \xi \text{ is as in {\bf(IIa)}},\\
L_{2} + \sum_{i=1}^p \Delta_{i}^{\cK} + V_{2} \quad \text{ on } L^2(\mu\otimes m^p),
&\text{ if } \xi \text{ is as in {\bf(III)}},
\end{cases}
\ee
where $L_{1}$ and $L_{2}$ are the generators of {\bf(IIa)} and {\bf(III)} respectively, 
$\Delta_i^{\cK}$ is given as in \eqref{Lpdef} but acting on the second coordinate of $f\in\ell^2(m^n\otimes m^p)$ and $f\in L^2(\mu\otimes m^p)$ (if $\xi$ is as in {\bf(IIa)} and {\bf(III)} respectively), and $V_{1}$ (as in \cite[Eq. (16)]{CGM12})
and $V_{2}$ (as in \cite[Eq. (2.2.2)]{GdHM07}) by 
\be{V1def}
V_{1}(x,y) = \sum_{i=1}^{n}\sum_{j=1}^{p}\delta_{0}(x_{j}-y_{i}),
\quad x=(x_1,\cdots,x_p)\in\Z^{dp},\, y=(y_1,\cdots,y_n)\in\Z^{dn},
\ee
and
\be{V2def}
V_{2}(\eta,x) = \sum_{i=1}^{p}\eta(x_i),
\quad \eta\in\Omega,\, x=(x_1,\cdots,x_p)\in\Z^{dp}.
\ee
Since $L_{1}$ and $L_{2}$ are reversible and bounded, and $\cK$ has compact support 
and is symmetric, $G_{V}^{\cK}$ is a \emph{bounded self-adjoint} operator. 

\medskip\noindent
{\bf Upper bound:}
Let
\be{kappainfsup}
\kappa^\ast = \esssup[\mathrm{supp}(\cK)],
\qquad \kappa_{\ast} = \essinf[\mathrm{supp}(\cK)],
\ee 
and let $B_R(t) \subset \Z^d$ be the box of radius $R(t) = t\log t$ centered at the origin. Then, for any fixed realization of $\cK$, we have 
\be{Ptrunc*}
P_0\big(X^{\cK}(1)\notin B_{R(t)}\big)
\leq P\big(N(2d\kappa^{\ast})\geq R(t)\big)
\leq\exp[-C(d,\kappa^\ast)R(t)]
\ee
for some $C(d,\kappa^\ast)>0$, where $N(2d\kappa^{\ast})$ is Poisson distributed with 
parameter $2d\kappa^*$. Thus, $\lim_{t\to\infty}\frac{1}{t} \log P_0(X^{\cK}(1)\notin 
B_{R(t)})=-\infty$.

\medskip\noindent
{\bf Lower bound:}
Since $\cK$ is bounded away from zero and infinity, it follows that for any finite
$K\subset\Z^d$ there exists $C>0$ such that
\be{}
P_0\big(X^{\cK}(1)=x\big)\geq\bigg(\frac{\kappa_{\ast}}
{2d\kappa^{\ast}}\bigg)^{\|x\|}e^{-2d\kappa^{\ast}}\,\frac{(2d\kappa_{\ast})^{\|x\|}}{\|x\|!}\geq C.
\qquad \forall\,x\in K. 
\ee
Picking $K=K_{\delta}$, $\delta>0$, we get, as in \cite[Eq.\ (2.2.10)]{GdHM07}, 
\be{}
\begin{aligned}
E_0^{\otimes p}\otimes E_0^{\otimes n}&\left(\exp\left\{\int_0^t V_1(Y(s))\, ds\right\}\right)\\
&\geq (C_{\delta}^{\mathcal{K}})^p(C_{\delta}^{\rho})^n
\sum_{\substack{x_1,\ldots, x_p\in K_{\delta}\\y_1,\ldots, y_n\in K_{\delta}}}
E_{x_1,\ldots, x_p}\otimes E_{y_1,\ldots, y_n}\left(\exp\left\{\int_0^{t-1}V_1(Y(s))\, ds\right\}\right),
\end{aligned}
\ee
if $\xi$ is as in {\bf (IIa)}, and
\be{}
\E_{\mu,0,\ldots,0}\left(\exp\left\{\int_{0}^tV_2(Y(s))\, ds\right\}\right)
\geq (C_{\delta}^{\cK})^p\sum_{x_{1},\ldots,x_{p}\in K_{\delta}}
\E_{\mu,x_{1},\ldots,x_{p}}\left(\exp\left\{\int_{0}^{t-1}V_2(Y(s))\, ds\right\}\right),
\ee
if $\xi$ is as in {\bf(III)}.
Here $C_{\delta}^{\mathcal{K}}=\min_{x\in K_{\delta}}P(X^{\cK}(1)=x)>0$ and
$C_{\delta}^{\rho}= \min_{x\in K_{\delta}}P_0(X^{\rho}(1)=x)>0$.  Now proceed as in the proof of 
\cite[Proposition 2.2.1]{GdHM07} and then apply the Rayleigh-Ritz formula as in the proof of 
\cite[Proposition 2.2.2]{GdHM07}.
\epr

\subsubsection{Proof of Proposition~\ref{prop:infirw}}
\label{S2.4.3}

\bpr
We only prove the case $p=1$, the extension to general $p$ being straightforward.
The proof of Proposition \ref{prop:infirw} is divided into 2 Steps.

\medskip\noindent
{\bf Step 1:} 
We first show that $\lambda_1(\cK)$ is bounded from above by the right-hand 
side of \eqref{eq:varISRW}. Recall \eqref{VNdef}.

\begin{claim}
\label{cl:upper}
There is a sequence of constants $C_t$, $t> 0$, with $\lim_{t\to\infty}C_t=\infty$ such that for all $N\in\N$ and $t>0$,
\begin{equation}
\label{eq:sharpupper}
\EE_{\mu,0} \Bigg(\exp\Bigg\{\int_0^t V_N\big(\xi_s,X^{\cK}(s)\big)\, ds\Bigg\}
\Bigg) \leq e^{t\lambda(V_N)}(2t\log t+1)^{d} + e^{-C_t t},
\end{equation}
where $\EE_{\mu,0}$ denotes expectation w.r.t.\ the joint process $(\xi,X^{\cK})$ when 
$\xi$ is drawn from $\mu$ and $X^{\cK}$ starts at $0$, and 
\begin{equation}
\label{eq:lambdaVN}
\lambda(V_N) = \sup_{\|f\|_{L^2\big(\N_0^{\Z^d}\times\Z^{d},\mu\otimes m\big)}=1}
\left\langle \left(L+\Delta^{\cK}+V_N\right)f,f\right\rangle.
\end{equation}
\end{claim}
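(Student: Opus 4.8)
\textbf{Proof proposal for Claim~\ref{cl:upper}.}

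The plan is to reduce the expectation on the left-hand side of \eqref{eq:sharpupper} to a spectral quantity associated with the bounded self-adjoint operator $L+\Delta^{\cK}+V_N$, exploiting that $V_N$ is bounded (by $N$) so that no ultracontractivity subtleties enter. First I would split the expectation according to whether the walk $X^{\cK}$ stays inside the box $B_{R(t)}$ with $R(t)=t\log t$ up to time $t$ or not. On the event that $X^{\cK}$ exits $B_{R(t)}$ before time $t$, I bound $V_N\le N$ crudely and use a large-deviation estimate for the exit time of a rate-$2d$ random walk from a box of radius $t\log t$ — exactly the Poisson tail bound already used in \eqref{Ptrunc*} — to get a contribution of order $e^{Nt}\,e^{-C R(t)}=e^{-C_t t}$ with $C_t\to\infty$; this accounts for the second term on the right-hand side of \eqref{eq:sharpupper}.

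On the complementary event, the walk is confined to $B_{R(t)}$, a finite set of cardinality at most $(2t\log t+1)^d$. Here I would bound the confined expectation by the principal eigenvalue of $L+\Delta^{\cK,B_{R(t)}}+V_N$ on $L^2(\mu\otimes m_{B_{R(t)}})$, where $\Delta^{\cK,B_{R(t)}}$ is the Dirichlet restriction of $\Delta^{\cK}$ to the box. The standard route is: write the confined Feynman–Kac expectation as $\langle e^{t(L+\Delta^{\cK,B}+V_N)}\mathbf{1}_0,\mathbf{1}\rangle_{\mu\otimes m}$ (using the starting point $0$ and summing over endpoints), apply Cauchy–Schwarz and the spectral theorem for the bounded self-adjoint semigroup to obtain the upper bound $e^{t\lambda_B(V_N)}\,\|\mathbf{1}_0\|\,\|\mathbf{1}_{B}\|$ where the norms contribute a factor $(2t\log t+1)^d$ (here is where the only hands-on estimate lives: $\|\mathbf{1}_B\|^2_{\mu\otimes m}=|B|$ since $\mu$ is a probability measure), and finally observe by domain monotonicity of Rayleigh quotients that $\lambda_B(V_N)\le\lambda(V_N)$ with $\lambda(V_N)$ as in \eqref{eq:lambdaVN}. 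Combining the two contributions gives \eqref{eq:sharpupper}.

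One technical point to handle carefully is that $\xi$ in case \textbf{(IIb)} is an interacting-particle system on the unbounded state space $\N_0^{\Z^d}$, so $L$ is an unbounded operator and one must make sense of $e^{t(L+\Delta^{\cK}+V_N)}$ and of the variational formula \eqref{eq:lambdaVN}; this is why the truncation $V_N=\sum_i[N\wedge\eta(x_i)]$ is essential, and why the generator estimates of Appendix~\ref{SB} (controlling $L$ on the $l$-norm spaces $\cE_l,\cL_l$) are invoked. I expect the main obstacle to be precisely this: justifying that the Feynman–Kac semigroup with the truncated potential is a well-defined strongly continuous bounded perturbation and that its variational (Rayleigh–Ritz) characterization holds, so that the domain-monotonicity step $\lambda_B(V_N)\le\lambda(V_N)$ and the spectral bound are legitimate. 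Once the semigroup is under control, the confinement split and the cardinality bound are routine, following \cite[Proposition 2.2.1]{GdHM07} and \cite[Proposition 2.2.2]{GdHM07} with $\Delta$ replaced by $\Delta^{\cK}$, using only that $\cK$ is bounded above and below by \eqref{staterg}.
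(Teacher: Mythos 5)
Your treatment of the confined contribution is essentially the paper's argument: the paper applies the full-space Feynman--Kac semigroup $P_t^{V_N}$ (self-adjointness via Demuth--van Casteren, differentiability via Lemma~\ref{lem:B}) to $\bar f=\one\{x\in B_{R(t)}\}$ and uses a Gronwall plus Cauchy--Schwarz estimate, rather than your Dirichlet restriction plus domain monotonicity of the Rayleigh quotient, but the two routes are interchangeable and both produce the polynomial factor $(2t\log t+1)^d$ from $\|\one_{B_{R(t)}}\|^2_{\mu\otimes m}=|B_{R(t)}|$.

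The genuine gap is in the exit-event estimate. The claim asserts a \emph{single} sequence $C_t\to\infty$ valid for all $N\in\N$ simultaneously, and this uniformity in $N$ is indispensable for the way the claim is used in \eqref{eq:monotone}: there one first lets $N\to\infty$ at fixed $t$ by monotone convergence and then takes $\sup_{N\in\N}$ of the right-hand side, so any remainder growing in $N$ destroys the argument. Your crude bound $V_N\le N$ combined with the Poisson tail gives $e^{Nt}e^{-C t\log t}$, i.e.\ a constant $C_t=C\log t-N$ that depends on $N$ and becomes useless after the supremum over $N$. The paper instead uses $V_N(\xi_s,X^{\cK}(s))\le \xi(X^{\cK}(s),s)$ to dominate the truncated exponential by the untruncated one, and then controls the latter on the exit event via the representation \eqref{eq:ISRWrep} together with the bounds $w(x,s)\le\bar{w}(0,s)\le pG(0)/(1-pG(0))<\infty$ from \eqref{eq:wlimit}, valid precisely because $0<p<1/G(0)$. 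This yields a factor $e^{Ct}$ independent of $N$ against $P_0\big(X^{\cK}([0,t])\not\subset B_{R(t)}\big)\le e^{-C't\log t}$, hence \eqref{eq:negligiblebox} with $C_t$ uniform in $N$. This use of the Green-function hypothesis $p<1/G(0)$ in the exit estimate is the missing ingredient in your proposal.
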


\noindent
Claim~\ref{cl:upper} implies the upper bound in Proposition~\ref{prop:infirw}. Indeed, 
via monotone convergence, for all $t>0$,
\begin{equation}
\label{eq:monotone}
\begin{aligned}
\EE_{\mu,0}
&\Bigg(\exp\Bigg\{\int_0^t\xi(X^{\cK}(s),s)\, ds\Bigg\}\Bigg)\\
&= \lim_{N\to\infty} \EE_{\mu,0}\Bigg(\exp\Bigg\{\int_0^t 
V_N\big(\xi_s,X^{\cK}(s)\big)\, ds\Bigg\}\Bigg)\\
&\leq \sup_{N\in\N} \{e^{t\lambda(V_N)}(2t\log t+1)^{d} +e^{-C_t t}\} 
= e^{t\sup_{N\in\N}\lambda(V_N)}(2t\log t +1)^{d} + e^{-C_t t}.
\end{aligned}
\end{equation}
Taking the logarithm, dividing by $t$ and letting $t\to\infty$,leads to the desired upper bound.

Before we begin the proof of Claim~\ref{cl:upper} we recall some facts from G\"artner 
and den Hollander~\cite{GdH06}. A slight generalization of \cite[Proposition 2.1]{GdH06} 
states that
\begin{equation}
\label{eq:ISRWrep}
\begin{aligned}
&\EE_{\mu,0}\Bigg(\exp\Bigg\{\int_0^t\xi(X^{\cK}(s),s)\, ds\Bigg\}u_0(X^{\cK}(t))\Bigg) \\
&\qquad 
= e^{\nu t}E_{0}\Bigg(\exp\Bigg\{\nu\int_0^{t} w(X^{\cK}(s),s)\, ds\Bigg\}u_0(X^{\cK}(t))\Bigg).
\end{aligned}
\end{equation}
Here, the function $w$ is the 
solution of the equation
\begin{equation}
\label{eq:w}
\left\{
\begin{array}{l}
\displaystyle\frac{\partial}{\partial t}w(x,t) 
= \Delta w(x,t) + \Bigg[\sum_{i=1}^{p}\delta_{X_i^{\cK}(t)}(x)\Bigg]\{w(x,t)+1\},\bigg.\\
w(x,0) = 0,
\end{array}
\qquad x\in\Z^d,\,t\geq 0\, .
\right.
\end{equation}
Moreover, \cite[Propositions 2.2--2.3]{GdH06} state that there is a function $\bar{w}\colon\,\Z^d\times
[0,\infty)\to\R$ such that: (i) $w(x,t) \leq \bar{w}(0,t)$ for all $x\in\Z^d, t\geq 0$; (ii) $t\mapsto \bar{w}(0,t)$ 
is non-decreasing with limit
\begin{equation}
\label{eq:wlimit}
\bar{w}(0) =
\left\{
\begin{array}{ll}
\frac{pG(0)}{1-pG(0)}, &\mbox{if } 0<p<1/G(0),\\
\infty, &\mbox{otherwise.}
 \end{array}
 \right.
\end{equation}

We are now ready to prove Claim~\ref{cl:upper}. We use ideas from Kipnis and 
Landim~\cite[Appendix 1.7]{KL99}.
Recall the uniform ellipticity assumption~\eqref{staterg} on the $\cK$-field.
Thus, by standard large deviation estimates of the number of jumps of $X^{\cK}$ 
and by \eqref{eq:ISRWrep}--\eqref{eq:wlimit}, there is a sequence of constants 
$C_t$ as in the statement of Claim~\ref{cl:upper} such that for all $t>0$ and $N\in\N$,
\begin{equation}
\label{eq:negligiblebox}
\EE_{\mu,0} \Bigg(\exp\Bigg\{\int_0^t V_N\big(\xi_s,X^{\cK}(s)\big)\, ds\Bigg\}
\one\{X^{\cK}([0,t])\subsetneq B_{R(t)}\}\Bigg) \leq e^{-C_t t}.
\end{equation} 
Here, $B_{R(t)}$ denotes the box centered at the origin with side length $R(t)=t\log t$.
We now make use of the following fact (which follows from Demuth and van 
Casteren~\cite[Theorem 2.2.5]{DC00}). Let $W\colon\,\N_0^{\Z^d}\times\Z^d \to \R$ be a bounded 
function. Then $L + \Delta^{\cK} + W$ is a self-adjoint operator on $L^2(\N_0^{\Z^d} \times \Z^d,
\mu\otimes m)$, and is the generator of the semigroup
\begin{equation}
\label{eq:semigr}
(P_t^{W}f)(\eta,x) = \EE_{\eta,x}\Bigg(\exp\Bigg\{\int_0^t W\big(\xi_s,X^{\cK}(s)\big)\, ds\Bigg\} 
f(\xi_t,X^{\cK}(t))\Bigg), \qquad t>0.
\end{equation}
In particular, the function 
$v_t(\eta,x) = (P_t^{V_N}\bar{f})(\eta,x)$ with $\bar{f}(\eta,x) = \one\{x\in B_{R(t)}\}$
is a solution of the equation
\begin{equation}
\label{eq:semigroupeq}
\left\{\begin{array}{ll}
&\frac{\partial}{\partial t} v_t(\eta,x)
= (L+\Delta^{\cK} + V_N)v_t(\eta,x),\\[0.2cm]
&v_0(\eta,x) = \bar{f}(\eta,x),
\end{array}\right.
\quad \eta\in\N^{\Z^d},\ x\in\Z^d,\ t\geq 0.
\end{equation}
Here $V_N$ acts as a multiplication operator.
Since $\bar{f}\in L^2(\N_0^{\Z^d}\times\Z^d, \mu\otimes m)$ we can write
\begin{equation}
\label{eq:semigrouprewrite}
\begin{aligned}
\EE_{\mu,0}
&\Bigg(\exp\Bigg\{\int_0^t V_N\big(\xi_s,X^{\cK}(s)\big)\, ds\Bigg\}\one\{X^{\cK}([0,t])\subset B_{R(t)}\}\Bigg)\\
&\leq \EE_{\mu,0}
\Bigg(\exp\Bigg\{\int_0^t V_N\big(\xi_s,X^{\cK}(s)\big)\, ds\Bigg\}\one\{X^{\cK}(t)\in B_{R(t)}\}\Bigg)\\
&\leq  \int_{\N_0^{\Z^d}}\sum_{x\in\Z^d} \one\{x\in B_{R(t)}\}\, 
\EE_{\eta,x}\Bigg(\exp\Bigg\{\int_0^t V_N\big(\xi_s,X^{\cK}(s)\big)\, ds\Bigg\}
\one\{X^{\cK}(t)\in B_{R(t)}\}\Bigg)\, d\mu(\eta)\\
&= \langle P_t^{V_N}\bar{f},\bar{f}\rangle.
\end{aligned}
\end{equation}
Moreover, by \eqref{eq:semigroupeq}, for all $t>0$,
\begin{equation}
\label{eq:derivative}
\begin{aligned}
\frac{\partial}{\partial t} \|P_t^{V_N}\bar{f}\|_{L^2(\N_0^{\Z^d}\times\Z^d, \mu\otimes m)}^2
&= \int_{\N_0^{\Z^d}}\sum_{x\in\Z^d} \left[2\left(L+\Delta^{\cK}+V_N\right)(P_t^{V_N}\bar{f})(\eta,x)
\times (P_t^{V_N}\bar{f})(\eta,x)\right]\, d\mu(\eta)\\
&= 2\left\langle \left(L+\Delta^{\cK}+V_N\right)P_t^{V_N}\bar{f},P_t^{V_N}\bar{f}\right\rangle\\
&\leq 2\lambda(V_N)\|P_t^{V_N}\bar{f}\|_{L^2(\N_0^{\Z^d}\times\Z^d, \mu\otimes m)}^2,
\end{aligned}
\end{equation}
where interchanging the derivative and the scalar product is justified by dominated 
convergence in combination with Lemma~\ref{lem:B} in the appendix section. Further 
note that 
\begin{equation}
\|P_0^{V_N}
\bar{f}\|_{L^2(\N_0^{\Z^d}\times\Z^d, \mu\otimes m)}^{2}= |B_{R(t)}|\leq (2t\log t+1)^d,
\end{equation}
so that, by Gronwall's lemma,
\begin{equation}
\label{eq:Gronwall}
\|P_t^{V_N}\bar{f}\|_{L^2(\N_0^{\Z^d}\times\Z^d, \mu\otimes m)}^{2}
\leq e^{2\lambda(V_N)t}(2t\log t+1)^d.
\end{equation}
Using Cauchy-Schwarz and $\|\bar{f}\|_{L^2(\N_0^{\Z^d}\times\Z^d, \mu\otimes m)}^{2}=1$, 
we obtain that
\begin{equation}
\label{eq:CS}
\left\langle P_t^{V_N}\bar{f},\bar{f}\right\rangle \leq e^{\lambda(V_N)t} (2t\log t +1)^{d}.
\end{equation}
The claim follows by combining \eqref{eq:negligiblebox}, \eqref{eq:semigrouprewrite} 
and \eqref{eq:CS}.

\medskip\noindent
{\bf Step 2:} 
It remains to show that $\lambda_1(\cK)$ is bounded from below by the right-hand 
side of \eqref{eq:varISRW}. The proof follows the same line of argument as the 
proof of \cite[Proposition 2.2.1]{GdHM07} for $\cK \equiv \kappa$. The details to 
adapt it are left to the reader since they are similar to those given in the proof of 
the lower bound in Section~\ref{S2.4.1}.
\epr


\section{Annealed Lyapunov exponents: confinement approximation and lower bound in Theorem~\ref{th:ann}}
\label{S3}

In Section~\ref{S3.1} we show that the annealed Lyapunov exponents for $\cK\equiv\kappa$ 
do not change when the random walk in the Feynman-Kac formula \eqref{FK} is confined to 
a slowly growing box (Proposition~\ref{prop:clusstrat}). In Section~\ref{S3.2} we use this
result to prove the lower bound in Theorem~\ref{th:ann}, i.e.,
$\sup\{\lambda_p(\kappa):\, \kappa\in\mathrm{Supp}(\mathcal{K})\}
\leq \lambda_p(\mathcal{K})$.
Throughout this section we assume that $u_0=\delta_0$, see Proposition~\ref{prop:locin} for a justification of that assumption.

\subsection{Confinement approximation}
\label{S3.1}

\begin{proposition}
\label{prop:clusstrat}
Fix $p\in\N$ and $\kappa>0$, and let $\xi$ be as in {\rm {\bf (I)--(III)}}. Fix a non-decreasing 
function $L\colon\,[0,\infty)\to[0,\infty)$ such that $\lim_{t\to\infty} L(t)=\infty$. Then
\be{BoxALE}
\lim_{t\to\infty} \frac{1}{pt} \log \EE\Bigg[E_{0}\left(\exp\left\{\int_{0}^t 
\xi(X^\kappa(s),s)ds\right\}\delta_0(X^{\kappa}(t))\one\left\{X^\kappa[0,t]\subset B_{L(t)}(0)\right\}\right)^{p}\Bigg]
=\lambda_{p}(\kappa).
\ee
\end{proposition}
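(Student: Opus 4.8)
The plan is to prove the matching bounds $\limsup_{t\to\infty}\frac1{pt}\log(\cdots)\le\lambda_p(\kappa)$ and $\liminf_{t\to\infty}\frac1{pt}\log(\cdots)\ge\lambda_p(\kappa)$ for the expression in \eqref{BoxALE}; since $\lambda_p(\kappa)$ exists by Propositions~\ref{prop:wn}--\ref{prop:revss}, this simultaneously yields existence of the limit. For a finite connected box $B\ni0$ let $Q^B_t$ denote the expression inside the logarithm in \eqref{BoxALE} with $B_{L(t)}(0)$ replaced by $B$. \emph{Upper bound.} Dropping the indicator $\one\{X^\kappa[0,t]\subset B_{L(t)}(0)\}$ only increases the (non-negative) integrand, and by \eqref{FK} together with the time-reversal identity \eqref{eq:timerev} the resulting quantity equals, in $\P$-distribution, $u(0,t)^p$ with $u_0=\delta_0$, whose exponential rate is $\lambda_p(\kappa)$ by definition and Proposition~\ref{prop:locin}. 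Hence the whole content lies in the lower bound.

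\emph{Reduction to a fixed box.} If $B\subseteq B_{L(t)}(0)$, then $\one\{X^\kappa[0,t]\subset B\}\le\one\{X^\kappa[0,t]\subset B_{L(t)}(0)\}$, and since all integrands are non-negative and $r\mapsto r^p$ is non-decreasing on $[0,\infty)$ we get $Q^B_t\le Q^{B_{L(t)}(0)}_t$ for every $t$ large enough that $B\subseteq B_{L(t)}(0)$; this holds eventually because $L(t)\to\infty$. Therefore it suffices to produce, for each $\varepsilon>0$, a finite box $B$ with $\liminf_{t\to\infty}\frac1{pt}\log Q^B_t\ge\lambda_p(\kappa)-\varepsilon$: then $\liminf_{t\to\infty}\frac1{pt}\log Q^{B_{L(t)}(0)}_t\ge\lambda_p(\kappa)-\varepsilon$ for all $\varepsilon>0$, which together with the upper bound gives the claim.

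\emph{The fixed-box exponent.} The key observation is that adjoining the confinement indicator to the Feynman--Kac functionals of Section~\ref{S2.1} (in particular to \eqref{eq:pmwn}--\eqref{eq:pmfnirw}) amounts exactly to replacing, throughout, each spatial Laplacian $\Delta^\kappa_i$ by its Dirichlet restriction $\Delta^{\kappa,B}_i$ to $B$, equivalently to constraining the spatial trial functions to be supported in $B$. With this change the proofs of Propositions~\ref{prop:wn}--\ref{prop:revss} given in Section~\ref{S2.4} go through \emph{mutatis mutandis}, and in fact simplify: in the upper-bound part the auxiliary growing box (there $B_{R(t)}$ with $R(t)=t\log t$, cf.\ \eqref{eq:negligiblebox}) may be taken equal to the fixed box $B$ from the outset, while the lower-bound part already works with trial functions of compact spatial support, which lie in $B$ once $B$ is large (for {\bf (IIb)} one keeps in addition the truncation $V_N$ and the supremum over $N$ of Proposition~\ref{prop:infirw}, cf.\ \eqref{eq:monotone}). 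One thus obtains that $\lim_{t\to\infty}\frac1{pt}\log Q^B_t$ exists and equals the number $\lambda^B_p$ produced by the variational formula of the relevant Proposition~\ref{prop:wn}, \ref{prop:fnirw}, \ref{prop:infirw} or \ref{prop:revss} upon restricting the spatial trial functions to $B$. Clearly $\lambda^B_p\le\lambda_p(\kappa)$, while $\lambda^B_p\uparrow\lambda_p(\kappa)$ as $B\uparrow\Z^d$ because the (near-)maximising trial functions in those propositions may be taken with compact spatial support (and, for {\bf (IIb)}, finite truncation level). Given $\varepsilon>0$, choosing $B$ with $\lambda^B_p\ge\lambda_p(\kappa)-\varepsilon$ completes the argument. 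The same chain covers the case $\lambda_p(\kappa)=+\infty$ (which for {\bf (IIb)} occurs when $p\ge1/G(0)$, by \cite[Theorem 1.4]{GdH06}), since then $\sup_B\lambda^B_p=+\infty$.

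\emph{Main obstacle.} The delicate feature is that the confinement restricts only the spatial motion of $X^\kappa$, not the driving field $\xi$, so for the environments {\bf (IIb)} and {\bf (III)} the operator associated with $Q^B_t$ still carries the infinite-dimensional generator $L$ of $\xi$. Two points must therefore be checked rather than simply read off from the homogeneous theory: first, that this operator remains bounded and self-adjoint and generates a positivity-improving Feynman--Kac semigroup, so that $\frac1{pt}\log Q^B_t\to\lambda^B_p$ via the Rayleigh--Ritz argument of \cite[Proposition~2.2.2]{GdHM07} applied with $\Delta^\kappa$ replaced by $\Delta^{\kappa,B}$ --- for {\bf (III)} this is automatic since $\eta(x)\in\{0,1\}$ keeps the potential bounded, whereas for {\bf (IIb)} it relies on the truncation $V_N$ and the extra $\sup_N$ of Proposition~\ref{prop:infirw} --- and second, that constraining the variational problem to spatially compactly supported trial functions does not lower its value in the limit $B\uparrow\Z^d$, which uses the continuity of the Dirichlet forms $A_2,A_3$ and of the potential term $A_1$ in Propositions~\ref{prop:wn}--\ref{prop:revss} under spatial truncation of $L^2$-functions (again aided by $V_N$ for {\bf (IIb)}). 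These verifications are routine but genuine, which is why the confinement approximation is isolated here rather than deduced directly from Section~\ref{S2}.
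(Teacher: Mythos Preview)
Your argument is correct, but it follows a genuinely different route from the paper's. The paper establishes \eqref{BoxALE} directly by superadditivity: writing $\Xi(s,t)$ for the confined expression on the time interval $[s,t]$, the monotonicity of $L$ together with the Markov property (and, for {\bf (III)}, the FKG machinery of Appendix~\ref{App}) yields $\Xi(s,t)\ge\Xi(s,u)\Xi(u,t)$, so the limit exists; then a ``regeneration'' sandwich
\[
\text{(unconfined at time }nT)\ \ge\ \Xi(0,nT)\ \ge\ \prod_{k=1}^{n}\text{(time-}T\text{ pieces, box }B_{L(nT)})
\]
obtained by inserting $\delta_0$ at times $kT$ identifies the limit with $\lambda_p(\kappa)$ upon letting $n\to\infty$ and then $T\to\infty$. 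Your approach instead freezes the box, shows that the confined problem on a fixed $B$ has its own variational formula $\lambda_p^B$ (by re-running Section~\ref{S2.4} with $\Delta^\kappa$ replaced by its Dirichlet restriction), and then lets $B\uparrow\Z^d$ to recover $\lambda_p(\kappa)$. This is heavier in that it re-invokes the Rayleigh--Ritz analysis and the approximation of $L^2$ trial functions by spatially compactly supported ones, but it has the conceptual advantage of tying the confinement approximation directly to the variational structure already built in Section~\ref{S2}. The paper's argument, by contrast, is shorter and self-contained, needing only the Markov property and the known value of the unconfined Lyapunov exponent, at the price of handling the superadditivity for {\bf (IIb)}--{\bf (III)} via the auxiliary devices of Appendix~\ref{App} (the inequality \eqref{eq:partition} and positive correlations, respectively). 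One small inaccuracy: for {\bf (IIb)} the generator $L$ of the infinite particle system is \emph{not} bounded, so $L+\Delta^{\kappa,B}+V_N$ is merely self-adjoint; the spectral argument still applies, but your phrase ``bounded and self-adjoint'' should be adjusted.
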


\begin{proof}
We write out the proof for the dynamics {\bf (I)}, namely for space-time white noise. Given $p$ 
independent simple random walks $X_1^{\kappa},X_2^{\kappa},\ldots, X_p^{\kappa}$, write 
$\bar{X}^{\kappa}= (X_1^{\kappa},X_2^{\kappa},\ldots, X_p^{\kappa})$. For $0\leq s<t<\infty$, 
define 
\begin{equation}
\label{eq:chi}
\begin{aligned}
\Xi^{\WN}(s,t)= E_{0}^{\otimes p}
\Bigg(\exp\Bigg\{&\sum_{1\leq i<j\leq p}\int_0^{t-s}
\one\{X_{i}^{\kappa}(v)=X_{j}^{\kappa}(v)\}\, dv\Bigg\}\\
&\times\delta_0(\bar{X}^{\kappa}(t-s))\,
\one\Big\{\bar{X}^{\kappa}[0,t-s]\subseteq B_{L(t-s)}(0)\Big\}\Bigg),
\end{aligned}
\end{equation}
where, with a slight abuse of notation, we redefine $B_{L(t)}(0)=[-L(t),L(t)]^{dp} \cap \Z^{dp}$. 
Pick $u\in[s,t]$. Using that $L$ is non-decreasing, inserting $\delta_0(\bar{X}^{\kappa}(u-s))$, 
and using the Markov property of $\bar{X}^{\kappa}$ at time $u-s$, we see that
\begin{equation}
\label{eq:superaddclus}
\Xi^{\WN}(s,t)\geq \Xi^{\WN}(s,u)\Xi^{\WN}(u,t).
\end{equation}
Hence,
\begin{equation}
\label{eq:limexists}
\lim_{t\to\infty}\frac1t\log\Xi^{\WN}(0,t) 
\end{equation}
exists. Thus, in order to prove Proposition~\ref{prop:clusstrat} it suffices to prove that 
\begin{equation}
\label{eq:discretelim}
\lim_{n\to\infty}\frac{1}{pnT}\log\Xi^{\WN}(0,nT)= \lambda_p(\kappa),
\quad T\in(0,\infty).
\end{equation}

Fix $T>0$. First, inserting $\one\{\bar{X}^{\kappa}[0,nT]\subseteq B_{L(nT)}(0)\}$ and second inserting 
$\delta_0(\bar{X}^{\kappa}(kT))$, $k\in\{1,2,\ldots, n-1\}$, and using the Markov 
property of $\bar {X}^{\kappa}$ at times $kT$ for the same set of indices, we get
\begin{equation}
\label{eq:regenerationarg}
\begin{aligned}
&E_0^{\otimes p}\Bigg(\exp\Bigg\{\sum_{1\leq i<j\leq p}\int_0^{nT}
\one\{X_{i}^{\kappa}(v)=X_{j}^{\kappa}(v)\}\, 
dv\Bigg\}\delta_0(\bar{X}^{\kappa}(nT))\Bigg)\\
&\geq \Xi^{\WN}(0,nT)\\
&\geq \prod_{k=1}^{n} E_0^{\otimes p}
\Bigg(\exp\Bigg\{\sum_{1\leq i<j\leq p}\int_0^{T}
\one\{X_{i}^{\kappa}(v)=X_{j}^{\kappa}(v)\}\, 
dv\Bigg\}\delta_0(\bar{X}^{\kappa}(T))\\
&\qquad\qquad\qquad \times \one\big\{\bar{X}^{\kappa}[0,T]\subseteq B_{L(nT)}(0)\big\}\Bigg).
\end{aligned}
\end{equation}
Taking the logarithm, dividing by $pnT$, and letting $n\to\infty$ followed by $T\to\infty$, we obtain
\begin{equation}
\label{eq:sandwicharg}
\lambda_p(\kappa)\geq \lim_{T\to\infty}\lim_{n\to\infty}\frac{1}{pnT} \log \Xi^{\WN}(0,nT)
\geq \lambda_p(\kappa),
\end{equation}
which is the desired claim. 

The proof for {\bf (II)--(III)} works along the same lines. To use the superadditivity argument as 
in \eqref{eq:superaddclus} and to get the inequalities in \eqref{eq:regenerationarg}, the same 
techniques as in the first step of the proof of Proposition~\ref{prop:locin} in Appendix \ref{App} 
may be applied.
\end{proof}

\subsection{Proof of the lower bound in Theorem~\ref{th:ann}}
\label{S3.2}

We give the proof for {\bf (I)}. The idea of the proof is to restrict the random walk to a box that 
slowly increases with time such that the $\cK$-field is constant on this box. The existence of 
such a box is guaranteed by the clustering property of $\cK$ stated in Definition~\ref{def:clustering}. 
Proposition~\ref{prop:clusstrat} then yields that the resulting Lyapunov exponent equals 
$\lambda_p(\kappa)$ with $\kappa$ the value of $\cK$ on this box.

\begin{proof}
The proof comes in 2 Steps.

\medskip\noindent
{\bf Step 1:} 
We first prove the lower bound in Theorem~\ref{th:ann} under the assumption that $\Supp(\cK)
= \{\kappa_1,\kappa_2\}$, $0<\kappa_1<\kappa_2<\infty$. By the clustering property of $\cK$, 
there is a function $L\colon\, [0,\infty) \to [0,\infty)$ with $\lim_{t\to\infty} L(t) = \infty$ such that 
there is a $x(\kappa_l,t)\in\Z^d$ with $g_l(t) \overset{def}{=} \|x(\kappa_l,t)\| \in o(t)$ such that 
$\cK(x,y)= \kappa_l$ for all edges $(x,y) \in B_{L(t)}(x(\kappa_l,t))$, $l\in\{1,2\}$. We fix $l\in\{1,2\}$ 
and, as in the proof of Proposition~\ref{prop:clusstrat}, denote by $\bar{X}^{\cK}$ the 
$\Z^{dp}$-valued process $(X_{1}^{\cK},\ldots, X_{p}^{\cK})$. An application of the Markov 
property of $\bar{X}^{\cK}$ at times $g_l(t)$ and $t-g_l(t)$ yields
\begin{equation}
\label{eq:annlowerest}
\begin{aligned}
E_0^{\otimes}
&\left(\exp\left\{\sum_{1\leq i<j\leq p}\int_0^t
\one\{X_{i}^{\cK}(s)= X_{j}^{\cK}(s)\}\, ds\right\}\delta_0(\bar{X}^{\cK}(t))\right)\\
&\geq E_0^{\otimes p}\left(\exp\left\{\sum_{1\leq i<j\leq p}\int_{0}^{g_l(t)}
\one\{X_{i}^{\cK}(s)=X_{j}^{\cK}(s)\}\, ds\right\}\delta_{x(\kappa_l,t)}\big(\bar{X}^{\cK}(g_l(t))\big)\right)\\
&\times E_{x(\kappa_l,t)}^{\otimes p}\left(\exp\left\{\sum_{1\leq i<j\leq p}\int_{0}^{t-2g_l(t)}
\one\{X_{i}^{\cK}(s)= X_{j}^{\cK}(s)\}\, ds\right\}\delta_{x(\kappa_l,t)}\big(\bar{X}^{\cK}(t-2g_l(t))\big)\right)\\
&\times  E_{x(\kappa_l,t)}^{\otimes p}\left(\exp\left\{\sum_{1\leq i <j\leq p}\int_{0}^{g_l(t)}
\one\{X_{i}^{\cK}(s)= X_{j}^{\cK}(s)\}\, ds\right\}\delta_{0}\big(\bar{X}^{\cK}(g_l(t))\big)\right)\\
&\overset{\hbox{def}}{=} U_1(t)\times U_2(t)\times U_3(t).
\end{aligned}
\end{equation} 
Note that 
\begin{equation}
\label{eq:annU1}
U_1(t)
\geq P_0\left(X^{\cK}(g_l(t))=x(\kappa_{l},t)\right),
\end{equation}
which is bounded from below by
\begin{equation}
\label{eq:trivlowerbound}
\left(\frac{\kappa_1}{2d\kappa_2}\right)^{g_l(t)}
e^{-2d\kappa_2 g_l(t)}\,\frac{(2d\kappa_1 g_l(t))^{g_l(t)}}{g_l(t)!},
\end{equation}
so that $\lim_{t\to\infty}\frac1t\log U_1(t)=0$.  The same reasoning shows that also $\lim_{t\to\infty}
\frac1t\log U_3(t)=0$. To control $U_2$, we use the lower bound
\begin{equation}
\label{eq:annconfine}
\begin{aligned}
&U_2(t)
\geq E_{x(\kappa_l,t)}^{\otimes p}\Bigg(\exp\left\{\sum_{1\leq i<j\leq p}\int_{0}^{t-2g_l(t)} 
\one\{X_{i}^{\cK}(s)= X_{j}^{\cK}(s)\}\,ds\right\}
\delta_{x(\kappa_l,t)}\big(\bar{X}^{\cK}(t-2g_l(t))\big)\\
&\qquad\qquad\qquad\qquad\times
\one\Big\{\bar{X}^{\cK}[0,t-2g_l(t)]\subseteq B_{L(t)-1}(x(\kappa_l,t))\Big\}\Bigg).
\end{aligned}
\end{equation}
Note that $X^{\cK}$ on the event 
$\{X^{\cK}[0,t]\subseteq B_{L(t)-1}(x(\kappa_l,t))\}$ is distributed as a random walk with diffusion 
constant $\kappa_l$ confined to stay in this  box. Hence, by the shift invariance of $\bar{X}^{\kappa}$ 
in space and Proposition~\ref{prop:clusstrat},
\begin{equation}
\label{eq:annU2}
\begin{aligned}
U_2(t)
&\geq E_{0}^{\otimes p}\Bigg(\exp\Bigg\{\sum_{1\leq i<j\leq p}\int_{0}^{t-2g_l(t)} 
\one\{X_{i}^{\kappa_l}(s)= X_{j}^{\kappa_l}(s)\}\, ds\Bigg\}
\delta_{0}\big(\bar{X}^{\kappa_l}(t-2g_l(t))\big)\\
&\qquad\qquad\qquad\qquad\times
\one\Big\{\bar{X}^{\kappa_l}[0,t-2g_l(t)]\subseteq B_{L(t)-1}(0)\Big\}\Bigg)\\
&\geq e^{\lambda_p(\kappa_l)(t-2g_l(t))p + o(t)}.
\end{aligned}
\end{equation}
Finally, (\ref{eq:annlowerest}--\ref{eq:annU2}) 
yield that
\begin{equation}
\label{eq:annlowerdiscrete}
\lambda_p(\cK) \geq \max\{\lambda_p(\kappa_1),\lambda_p(\kappa_2)\},
\end{equation}
which settles Theorem~\ref{th:ann} for the case where $\Supp(\cK)= \{\kappa_1,\kappa_2\}$, 
$\kappa_1,\kappa_2\in (0,\infty)$.

\medskip\noindent 
{\bf Step 2:} 
We next prove Theorem~\ref{th:ann} for the general case by reducing it to the setting of Step 1. 
Recall \eqref{kappainfsup}. Fix $n\in\N$. Given a realization of $\cK$, we define a discretization 
$\cK_n$ of $\cK$ by putting, for each $x,y \in \Z^d$,
\begin{equation}
\label{eq:annkappan}
\begin{aligned}
&\cK_n(x,y)\\
&= \left\{
\begin{array}{ll}
\kappa_{\ast}+(j-1)\frac{(\kappa^{\ast}-\kappa_{\ast})}{n}, 
&\mbox{if } \kappa_{\ast}+(j-1)\frac{(\kappa^{\ast}-\kappa_{\ast})}{n} \leq \cK(x,y) 
< \kappa_{\ast}+j\frac{(\kappa^{\ast}-\kappa_{\ast})}{n},\, 1 \leq j \leq n,\\
\kappa^{\ast}, &\mbox{if } \cK(x,y)=\kappa^{\ast}.
\end{array}
\right.
\end{aligned}
\end{equation}
A slight adaptation of Step 1 yields
\begin{equation}
\label{eq:annlambdan}
\lambda_p(\cK_n) \geq \max\{\lambda_p(\kappa),\, 
\kappa\in \Supp(\cK_n)\setminus\{\kappa^{\ast}\}\}.
\end{equation}
Here, the restriction to the set $\Supp(\cK_n)\setminus\{\kappa^{\ast}\}$ comes from the fact 
that $\tilde{\PP}(\cK(x,y)=\kappa^{\ast})=0$ is possible, e.g.\ when the distribution of $\cK$ 
is continuous. By Carmona and Molchanov~\cite[Proposition III.2.7]{CM94}, $\kappa\mapsto
\lambda_p(\kappa)$ is continuous, hence the right-hand side of \eqref{eq:annlambdan} 
converges to $\sup\{\lambda_p(\kappa),\, \kappa\in \Supp(\cK)\}$ as $n\to\infty$. Hence it 
suffices to show that $\limsup_{n\to\infty}\lambda_p(\cK_n)\leq \lambda_p(\cK)$.

To do so we borrow ideas from the proof of \cite[Theorem 1.2(i)]{GdHM11}. First we introduce 
the notation $\tilde{\cK}(x)= \sum_{y\in\Z^d}\cK(x,y)$, $x\in\Z^d$, and we define $\tilde{\cK}_n$ 
in a similar fashion. An application of Girsanov's formula yields 
that (see K\"onig, Salvi and Wolff~\cite[Lemma 2.1]{KSW12})
\begin{equation}
\label{eq:annGirsanov}
\begin{aligned}
E_{0}^{\otimes p}&\left(\exp\left\{\sum_{1\leq i<j\leq p}\int_0^t 
\one\{X_i^{\cK_n}(s)=X_j^{\cK_n}(s)\}\,ds\right\}\,
\delta_0(\bar{X}^{\cK_n}(t))\right)\\
&= E_0^{\otimes p}\bigg(\exp\left\{\sum_{1\leq i<j\leq p}\int_0^t 
\one\{X_i^{\cK}(s)=X_j^{\cK}(s)\}\,ds\right\}\,
\delta_0(\bar{X}^\cK(t))\\
&\times\exp\bigg\{\sum_{1\leq i\leq p}\sum_{l=1}^{N(X_i^{\cK};t)}
\log\Big[\frac{\cK_n(X_i^{\cK}(S_{l-1}),X_i^{\cK}(S_{l}))}{\cK(X_i^{\cK}(S_{l-1}),
X_i^{\cK}(S_{l}))}\Big]\\
&\qquad\qquad -\int_{0}^{t} \big[\tilde{\cK}_n(X_i^{\cK}(s))
-\tilde{\cK}(X_i^{\cK}(s))\big]\, ds\bigg\}\bigg),
\end{aligned}
\end{equation}
where $N(X^{\cK};t)$ denotes the number of jumps of the random walk $X^{\cK}$ with generator 
$\Delta^{\cK}$ up to time $t$. Note that $\frac{\cK_n(x,y)}{\cK(x,y)}\leq 1$ for all $x\sim y\in\Z^d$ 
and that $-\int_{0}^{t} [\tilde{\cK}_n(X^{\cK}(s))-\tilde{\cK}(X^{\cK}(s))]\, ds\leq 2dt/n$. Hence, the 
right-hand side of \eqref{eq:annGirsanov} is bounded from above by
\begin{equation}
\label{eq:annupperboundGir}
\begin{aligned}
E_0^{\otimes p}\Bigg(\exp\Bigg\{\sum_{1\leq i<j \leq p}
&\int_0^t \one\{X_i^{\cK}(s)=X_j^{\cK}(s)\}\,ds\Bigg\}\,
\delta_0(\bar{X}^{\cK}(t))\Bigg)e^{2dt/n}.
\end{aligned}
\end{equation}
Consequently, \eqref{eq:annGirsanov} and \eqref{eq:annupperboundGir} show that 
$\limsup_{n\to\infty}\lambda_p(\cK_n)\leq 
\lambda_p(\cK)$. This finishes the proof.
The proof for {\bf (II)} and {\bf (III)} is the same as above, with the additional restriction that 
$0<p<1/G(0)$ for {\bf (IIb)}. To get the inequality in \eqref{eq:annlowerest} we use the 
techniques in the first step of the proof of Proposition~\ref{prop:locin} in Appendix \ref{App}. 
By Castell, G\"un and Maillard~\cite[Theorem 1.1(ii)]{CGM12} and G\"artner and 
den Hollander~\cite[Theorem 1.5]{GdH06}, $\kappa\mapsto \lambda_p(\kappa)$ is 
continuous for {\bf (II)}, which allows us to take the limit on the right-hand side of 
\eqref{eq:annlambdan}. The continuity of $\kappa\mapsto\lambda_p(\kappa)$ for {\bf (III)} 
follows from Proposition~\ref{prop:revss}, which still holds when $\kappa$ is deterministic. 
Indeed, the variational formula in Proposition~\ref{prop:revss} shows that $\kappa\mapsto 
\lambda_p(\kappa)$ is convex. Since $\xi$ is bounded for {\bf (III)}, so is $\kappa\mapsto 
\lambda_p(\kappa)$, which yields the desired continuity. To obtain the result for {\bf (IIb)} with 
$p\geq 1/G(0)$, for which $\lambda_p(\kappa)=\infty$ for all $\kappa\geq 0$, we note that
averaging $u(0,t)^p$ first with respect to the trajectories $Y_j^y$ present in the definition of $\xi$, 
then with respect to the Poisson field $(N_y)_{y\in\Z^d}$ and using standard Feynman-Kac identities,
an adaption of the proof of \cite[Proposition 2.1]{GdH06} yields the estimate
\begin{equation}
\label{eq:strongcat}
\begin{aligned}
\EE[u(0,t)^{p}]
&\geq \EE\Bigg[E_0\Bigg(\exp\Bigg\{\int_0^{t}\xi(X^{\cK}(s),t-s)\, ds\Bigg\}
\one\Big\{X^{\cK}(s)=0\, \mbox{for all }s\in[0,t]\Big\}\Bigg)^p\Bigg]\\[0.2cm]
&\geq \exp\Bigg\{-pt\sum_{||x||=1}\cK(0,x)+p\nu t\Bigg\}
\exp\Bigg\{p\int_0^{t}\bar{w}(0,s)\, ds\Bigg\},
\end{aligned}
\end{equation}
where $\bar{w}$ solves the equation
\begin{equation}
\left\{
\begin{array}{ll}
\frac{\partial}{\partial t} \bar{w}(x,t)= \Delta \bar{w}(x,t) + \delta_0(x)[\bar{w}(x,t)+1],\\
w(x,0)= 0,
\end{array}
\quad x\in\Z^d,\, t\geq 0.
\right.
\end{equation}
To conclude it suffices to note that by \cite[Proposition 2.3]{GdH06} (with the notation $r_d
=1/G(0)$), $t\mapsto \bar{w}(0,t)$ is non-decreasing with $\lim_{t\to\infty}\bar{w}(0,t)=\infty$.
\end{proof}


\section{Quenched Lyapunov exponent: confinement approximation and lower bound}
\label{S4}

The proof of the existence of the quenched Lyapunov exponent follows along the lines 
of the proof of \cite[Theorem 1.1]{GdHM11}. In Section~\ref{S4.1} we show that a 
confinement approximation holds for $\cK\equiv\kappa$. In Section~\ref{S4.2} we use 
this result to prove Theorem~\ref{th:que}.

\subsection{Confinement approximation}
\label{S4.1}

\bp{BoxLemma}
Let $L\colon [0,\infty)\to[0,\infty)$ be non-decreasing with $\lim_{t\to\infty} L(t) = \infty$.
Then $\P$-a.s.\ and in $\P$-mean,
\be{BoxQLE}
\lim_{t\to\infty}\frac1t\log E_{0}\left(\exp\left\{\int_{0}^t \xi(X^\kappa(s),s)ds\right\}
\delta_{0}(X^\kappa(t))\,\one\Big\{X^\kappa[0,t]\subseteq B_{L(t)}(0)\Big\}\right)
= \lambda_{0}(\kappa).
\ee
\ep

\bpr
For $0\leq s\leq t<\infty$, define 
\be{chidef}
\Xi(s,t)= E_{0}\left(\exp\left\{\int_{0}^{t-s} \xi(X^\kappa(v),s+v)\, dv\right\}
\delta_{0}(X^\kappa(t-s))\,\one\Big\{X^\kappa[0,t-s]\subseteq B_{L(t-s)}(0)\Big\}\right).
\ee
Pick $u\in[s,t]$. Using that $L$ is non-decreasing and inserting 
$\delta_{0}(X^\kappa(u-s))$ under the expectation in (\ref{chidef}), we obtain
\be{chiineq}
\begin{aligned}
&\Xi(s,t)\geq
E_{0}\bigg(\exp\left\{\int_{0}^{u-s} \xi(X^\kappa(v),s+v)\, dv\right\}
\delta_{0}(X^\kappa(u-s))\,\one\Big\{X^\kappa[0,u-s]\subseteq B_{L(u-s)}(0)\Big\}\\
&\qquad\quad\quad \times\exp\left\{\int_{u-s}^{t-s} \xi(X^\kappa(v),s+v)\, dv\right\}
\delta_{0}(X^\kappa(t-s))\,\one\Big\{X^\kappa[u-s,t-s]\subseteq B_{L(t-u)}(0)\Big\}
\bigg).
\end{aligned}
\ee
Applying the Markov property of $X^\kappa$ at time $u-s$, we get
\be{}
\Xi(s,t)\geq\Xi(s,u)\Xi(u,t),
\quad 0\leq s\leq u\leq t<\infty.
\ee
Since $\xi$ is stationary and ergodic, and the law 
of $\{\Xi(u+s,u+t)\colon0\leq s\leq t<\infty\}$ is the same for all $u\geq 0$, it follows 
from Kingman's superadditive ergodic theorem that
\be{}
\label{eq:L1conv}
\lim_{t\to\infty}\frac1t\log\Xi(0,t)
\mbox{ exists } \P\mbox{-a.s.\ and in } \P\mbox{-mean, and is non-random.}
\ee
Thus, in order to prove (\ref{BoxQLE}), it suffices to show that
\be{}
\lim_{n\to\infty}\frac1{nT}\log \Xi(0,nT)=\lambda_{0}(\kappa),
\quad T\in(0,\infty).
\ee
Inserting $\one\{X^\kappa[0,nT]\subset B_{L(nT)}(0)\}$ and $\delta_0(X^{\kappa}(kT))$, 
$k\in\{1,2,\ldots, n-1\}$, and using the Markov property of $X^\kappa$ at times $kT$ for 
the same set of indices, we get
\be{Box1}
\begin{aligned}
&E_{0}\left(\exp\left\{\int_{0}^{nT} \xi(X^\kappa(s),s)ds\right\}
\delta_{0}(X^\kappa(nT))\right)\\
&\quad \geq \Xi(0,nT)\\
&\quad \geq \prod_{i=1}^{n}E_{0}\left(\exp\left\{\int_{0}^{T} \xi(X^\kappa(s),(i-1)T+s)ds\right\}
\delta_{0}(X^\kappa(T))\,\one\Big\{X^\kappa[0,T]\subseteq B_{L(nT)}(0)\Big\}\right).
\end{aligned}
\ee
Using that $\xi$ is invariant under time shifts, we get 
\be{Box3}
\begin{aligned}
&\frac1{nT}\,\EE\left[\log E_{0}\left(\exp\left\{\int_{0}^{nT} \xi(X^\kappa(s),s)ds\right\}
\delta_{0}(X^\kappa(nT))\right)\right]\\
&\quad \geq \frac1{nT}\,\EE\big[\log \Xi(0,nT)\big]\\
&\quad \geq\frac1T\,\EE\left[\log E_{0}\left(\exp\left\{\int_{0}^{T} \xi(X^\kappa(s),s)ds\right\}
\delta_{0}(X^\kappa(T))\,\one\Big\{X^\kappa[0,T]\subseteq B_{L(nT)}(0)\Big\}\right)\right].
\end{aligned}
\ee
Letting $n\to\infty$ followed by $T\to\infty$, and using the $L^1$-convergence in \eqref{eq:L1conv}, 
we arrive at the sandwich
\be{sandwich}
\lambda_{0}(\kappa)\geq \lim_{T\to\infty}\lim_{n\to\infty}\frac1{nT}\log \Xi(0,nT)
\geq \lambda_{0}(\kappa).
\ee
The convergence of the rightmost term in \eqref{Box3} to the rightmost term in \eqref{eq:L1conv} 
can be shown by a direct comparison between these two terms using  condition {\rm (4)} for $\xi$.
\epr

\subsection{Proof of Theorem~\ref{th:que}}
\label{S4.2}

With the help of Proposition~\ref{BoxLemma} we can now give the proof of Theorem~\ref{th:que}.

\begin{proof}
The proof comes in 2 Steps.

\medskip\noindent
{\bf Step 1:} 
We first prove Theorem~\ref{th:que} under the assumption $\Supp(\cK) = \{\kappa_1,\kappa_2\}$, 
$\kappa_1,\kappa_2\in (0,\infty)$. By the clustering property of $\cK$, there exists a function $L\colon 
[0,\infty) \to [0,\infty)$ with $\lim_{t\to\infty} L(t)$ $= \infty$ such that there is an $x(\kappa_i,t)\in\Z^d$ 
with $g_i(t) = \|x(\kappa_i,t)\| \in o(t)$ such that $\cK(x,y)= \kappa_i$ for all $(x,y)\in B_{L(t)}(x(\kappa_i,t))$, 
$i\in\{1,2\}$. We fix $i\in\{1,2\}$. An application of the Markov property of the random walk at times 
$g_i(t)$ and $t-g_i(t)$ yields
\begin{equation}
\label{eq:lowerest}
\begin{aligned}
E_0&\left(\exp\left\{\int_0^t\xi(X^{\cK}(s),s)\, ds\right\}\delta_0(X^{\cK}(t))\right)\\
&\geq E_0\left(\exp\left\{\int_{0}^{g_i(t)}\xi(X^{\cK}(s),s)\, ds\right\}
\delta_{x(\kappa_i,t)}\big(X^{\cK}(g_i(t))\big)\right)\\
&\times E_{x(\kappa_i,t)}\left(\exp\left\{\int_{0}^{t-2g_i(t)}\xi(X^{\cK}(s),s+g_i(t))\, ds\right\}
\delta_{x(\kappa_i,t)}\big(X^{\cK}(t-2g_i(t))\big)\right)\\
&\times  E_{x(\kappa_i,t)}\left(\exp\left\{\int_{0}^{g_i(t)}\xi(X^{\cK}(s),s+t-g_i(t))\, ds\right\}
\delta_{0}\big(X^{\cK}(g_i(t))\big)\right)\\
&\overset{\hbox{def}}{=} U_1(t)\times U_2(t)\times U_3(t).
\end{aligned}
\end{equation} 
Further note that, by Jensen's inequality,
\begin{equation}
\label{eq:U1}
\begin{aligned}
&\E[\log U_1(t)]\\
&\quad\geq \E\bigg[ E_0\bigg( \int_{0}^{g_i(t)}\xi(X^{\cK}(s),s)\, ds 
~\Big|~ X^{\cK}(g_i(t)) = x(\kappa_i,t)\bigg) \bigg]
+\log P_0\bigg(X^{\cK}(g_i(t))= x(\kappa_i,t)\bigg)\\
&\quad = E_0\bigg[ \int_{0}^{g_i(t)}\E\Big(\xi(X^{\cK}(s),s)\Big)\, ds
~\Big|~ X^{\cK}(g_i(t)) = x(\kappa_i,t)\bigg] +\log P_0\bigg(X^{\cK}(g_i(t))= x(\kappa_i,t)\bigg)\\
&\quad= \E(\xi(0,0)) g_i(t)+\log P_0\bigg(X^{\cK}(g_i(t))= x(\kappa_i,t)\bigg),
\end{aligned}
\end{equation}
where the interchange of the expectations is justified because
\begin{equation}
\label{eq:exchange}
\E\bigg[ E_0\bigg( \int_{0}^{g_i(t)}|\xi(X^{\cK}(s),s)|\, ds ~\Big|~ 
X^{\cK}(g_i(t)) = x(\kappa_i,t)\bigg) \bigg] = \E(|\xi(0,0)|)g_i(t) <\infty.
\end{equation}
A similar computation yields the same lower bound for $\E[\log U_3(t)]$. Note that the lower 
bounds are sublinear in $t$. To control $U_2$, note that $X^{\cK}$ restricted to the event 
$\{X^{\cK}[0,t] \subset B_{L(t)-1}(x(\kappa_i,t))\}$ is distributed as a random walk with diffusion 
constant $\kappa_i$ confined to stay in this box. Hence
\begin{equation}
\label{eq:confine}
\begin{aligned}
U_2(t)
&\geq E_{x(\kappa_i,t)}\Bigg(\exp\left\{\int_{0}^{t-2g_i(t)} \xi(X^{\kappa_i}(s),s+g_i(t))\,ds\right\}
\delta_{x(\kappa_i,t)}\big(X^{\kappa_i}(t-2g_i(t))\big)\\
&\qquad\qquad\qquad\qquad\times\one\Big\{X^{\kappa_i}[0,t-2g_i(t)]
\subset B_{L(t)-1}(x(\kappa_i,t))\Big\}\Bigg),
\end{aligned}
\end{equation}
so that, by the space-time shift invariance of $\xi$ and Proposition~\ref{BoxLemma},
\begin{equation}
\label{eq:U2}
\begin{aligned}
\E\log U_2(t)
&\geq\E\log E_{0}\Bigg(\exp\left\{\int_{0}^{t-2g_i(t)} \xi(X^{\kappa_i}(s),s)ds\right\}
\delta_{0}\big(X^{\kappa_i}(t-2g_i(t))\big)\\
&\qquad\qquad \times \one\Big\{X^{\kappa_i}[0,t-2g_i(t)]\subset B_{L(t)-1}(0)\Big\}\Bigg)\\
&\geq e^{\lambda_0(\kappa_i)(t-2g_i(t)) + o(t)}.
\end{aligned}
\end{equation}
Since, by the first part of Theorem~\ref{th:que}, we have the representation
\begin{equation}
\lambda_0(\cK) = \lim_{t\to\infty}\frac{1}{t}\E(\log u(0,t)),
\end{equation} 
(\ref{eq:lowerest}--\ref{eq:U2}) yield
\begin{equation}
\label{eq:lowerdiscrete}
\lambda_0(\cK) \geq \max\{\lambda_0(\kappa_1),\lambda_0(\kappa_2)\},
\end{equation}
which settles the claim for the case $\Supp(\cK)= \{\kappa_1,\kappa_2\}$, $\kappa_1,\kappa_2
\in (0,\infty)$.

\medskip\noindent 
{\bf Step 2:} 
The strategy to extend the proof to the general case works similarly as in the second 
step of the proof of Theorem~\ref{th:ann} in Section \ref{S3.2}. However, since we do 
not know whether $\kappa\mapsto\lambda_0(\kappa)$ is continuous, some modifications 
are needed (see \cite[Theorem 1.2(i)]{GdHM11}, where conditions are provided under 
which the quenched Lyapunov exponent $\lambda_0(\kappa)$ is Lipschitz continuous 
outside any neighbourhood of zero). Fix $n\in\N$ and given a realisation of $\mathcal{K}$ 
define a discretization $\mathcal{K}_n$ of $\mathcal{K}$ as in the second step of the 
proof of Theorem~\ref{th:ann}. An adaptation of Step 1 yields
\begin{equation}
\label{eq:qulowerbddis}
\lambda_0(\mathcal{K}_n)\geq \max\{\lambda_0(\kappa),\, 
\kappa\in\mathrm{Supp}(\mathcal{K}_n)\setminus\{\kappa^{\ast}\}\}.
\end{equation}
To continue, we claim that $\kappa\mapsto\lambda_0(\kappa)$ is lower semi-continuous 
on $(0,\infty)$. Indeed, fix $t>0$ and $\kappa\in(0,\infty)$, as well as a sequence 
$(\kappa_n)_{n\in\N}$ such that $\kappa_n\to\kappa$ as $n\to\infty$. An application of 
Girsanov's formula yields that
\begin{equation}
\label{eq:quGirsanov}
\begin{aligned}
&u(0,t;\kappa_n)\\
&= E_0\left(\exp\left\{\int_0^t\xi(X^{\kappa_n}(s),s)\, ds\right\}\delta_0(X^{\kappa_n}(t))\right)\\
&= E_0\left(\exp\left\{\int_0^t\xi(X^{\kappa}(s),s)\, ds\right\}\delta_0(X^{\kappa}(t))
\exp\left\{N(X^{\kappa};t)\log\Big[\frac{\kappa_n}{\kappa}\Big]-2dt[\kappa_n-\kappa]\right\}\right).
\end{aligned}
\end{equation}
Hence, from Fatou's lemma we get that $\liminf_{n\to\infty} u(0,t;\kappa_n)\geq u(0,t;\kappa)$. 
This shows that $\kappa\mapsto u(0,t;\kappa)$ is lower semi-continuous for all $t>0$. 
Using that
\begin{equation}
\label{eq:supproperty}
\lambda_0(\kappa)=\sup_{t>0}\frac{1}{t}\log u(0,t;\kappa)
\end{equation}
(see the proof of \cite[Theorem 1.1]{GdHM11}), we get the claim by using that suprema 
of lower semi-continuous functions are lower semi-continuous.

To proceed, let $M=\sup\{\lambda_0(\kappa),\,\kappa\in\mathrm{Supp}(\mathcal{K})\}$. 
We claim that the liminf of the right-hand side of \eqref{eq:qulowerbddis} is bounded from
below by $M$. We distinguish between two cases. If $M=\infty$, then for each $R>0$ 
there is $\kappa_R\in\mathrm{Supp}(\mathcal{K})$ such that $\lambda_0(\kappa_R)\geq R$.
Since $\kappa\mapsto\lambda_0(\kappa)$ is lower semi-continuous, for any $\varepsilon >0$ 
there is a neighborhood $\mathcal{U}_R$ of $\kappa_R$ such that $\lambda_0(\kappa)
\geq \lambda_0(\kappa_R)-\varepsilon$ for all $\kappa\in\mathcal{U}_R$. Hence, for all 
$R\geq 0$ and $\varepsilon >0$, we obtain
\begin{equation}
\label{eq:liminf1}
\liminf_{n\to\infty}\max\{\lambda_0(\kappa),\, 
\kappa\in\mathrm{Supp}(\mathcal{K}_n)\setminus\{\kappa^{\ast}\}\}\geq R-\varepsilon.
\end{equation}
From this we get the claim by letting $R\to\infty$. The case $M<\infty$, may be treated 
similarly. It only remains to show that $\limsup_{n\to\infty}\lambda_0(\mathcal{K}_n)\leq 
\lambda_0(\mathcal{K})$. But this works verbatim as in the second step of the proof of 
Theorem~\ref{th:ann}. 
\end{proof}


\section{Quenched Lyapunov exponent: failure of upper bound}
\label{S5}

In this section we provide an example where the upper bound fails for
a decorated version of $\Z^d$, namely, we show that there is a choice
of $\cK$ for which
\be{queinequal}
\lambda_0(\cK) > \sup\{\lambda_{0}(\kappa)\colon \kappa\in \Supp(\cK)\}. 
\ee

Let $(V,\cE)$ denote the usual graph associated with $\Z^d$, i.e., $V=\Z^d$ 
and $\cE=\{e(x,y)\colon\, x,y\in V,\, x\sim y\}$ is the set of edges connecting 
nearest-neighbour vertices of $V$. We consider $(V^\star,\cE^\star)$, 
a \emph{decorated} version of $(V,\cE)$, where $V^\star=V$ but 
\begin{equation}
\label{decoredge}
\cE^\star = \big\{(e_{1}(x,y),e_{2}(x,y))\colon\, x,y \in V,\, x\sim y\big\},
\end{equation}
i.e., we draw two edges rather than one, say red and green, between every 
pair of nearest-neighbour vertices of $\Z^d$. 

Pick any $\cK$ on $\cE^\star$ that has the \emph{alternating cluster property}, 
i.e., there exist boxes $B_{L(t)}$, with $\lim_{t\to\infty} L(t)=\infty$, on which all 
red edges have value $\kappa_1$ and all green edges have value $\kappa_2$. 
For such $\cK$, by the confinement approximation of Proposition~\ref{BoxLemma}, 
we have
\begin{equation}
\label{decorLyap}
\lambda_{0}(\cK)\geq \lambda_{0}\left(\cK\equiv(\kappa_{1},\kappa_{2})^\cE\right)
=\lambda_{0}(\kappa_{1}+\kappa_{2}),
\end{equation}
where $(\kappa_1,\kappa_2)^{\cE}$ means that all red egdes take value $\kappa_1$ 
and all green edges take value $\kappa_2$. In \cite{GdHM11} we exhibited a class of 
dynamic random environments $\xi$ for which 
\be{}
\begin{array}{ll}
&\kappa\mapsto\lambda_0(\kappa) \text{ is continuous on } [0,\infty),\\ 
&\lambda_0(\kappa)>\E(\xi(0,0)) \,\,\forall \kappa\in(0,\infty),\\
&\lim_{\kappa\to\infty} \lambda_0(\kappa)=\lambda_0(0)=\E(\xi(0,0)).
\end{array}
\ee
In particular, $\kappa\mapsto\lambda_0(\kappa)$ is not monotone on $[0,\infty)$.
Hence there exist $\bar \kappa_1,\bar\kappa_2\in(0,\infty)$ such that
\begin{equation}
\label{unimodalineq}
\lambda_0\left(\frac{\bar \kappa_1+\bar\kappa_2}{2}\right)
>\max\{\lambda_0(\bar \kappa_1),\lambda_0(\bar \kappa_2)\}.
\end{equation}
Picking $\kappa_{1}=\bar \kappa_{1}/2$ and $\kappa_{2}=\bar \kappa_{2}/2$,
we get
\begin{equation}
\label{decorLyaplwbd}
\lambda_0\left(\cK\equiv(\kappa_1,\kappa_2)^\cE\right)
>\max\left\{\lambda_0\left(\cK\equiv(\kappa_1,\kappa_1)^\cE\right),
\lambda_0\left(\cK\equiv(\kappa_2,\kappa_2)^\cE\right)\right\}.
\end{equation}
Combining (\ref{decorLyap}) and (\ref{decorLyaplwbd}), we arrive at (\ref{queinequal}). 

The above counterexample does not apply to $\lambda_0(\cK)$ on $(V,\cE)$.
Nevertheless, since all previous theory developed for $\lambda_0(\cK)$ on 
$(V,\cE)$ carries over to $(V^\star,\cE^\star)$, the above example shows that 
there is little hope for the upper bound to hold for $\Z^d$.


\appendix


\section{Restriction to a localized initial condition}
\label{App}

In this appendix we prove Proposition \ref{prop:locin}. The proof is somewhat 
long and technical, but the flexibility in the choice of initial condition is important.
The proof is an adaptation of the proof of Drewitz, G\"artner, Ramirez and 
Sun~\cite[Theorem 4.1]{DGRS12}. Throughout this section we fix $p\in\N$.

\subsection{Dynamics {\bf (I)}} 

\noindent
\begin{proof}
Recall the representation of the $p$-th moment of $u(0,t)$ in \eqref{eq:pmwn}, and the notation
$\bar{X}^{\kappa}= (X_1^{\kappa},X_2^{\kappa},\ldots, X_p^{\kappa})$. For $0\leq s < t <\infty$ 
and $y,z\in\R^{dp}$ such that $ys, zt\in\Z^{dp}$, write
\begin{equation}
\label{eq:Xi}
\Xi^{\WN}_{y,z}(s,t) = E_{ys}^{\otimes p}\Bigg(\exp\Bigg\{
\sum_{1\leq i< j\leq p}\int_{0}^{t-s}\one\Big\{X_i^{\kappa}(v)=X_j^{\kappa}(v)\Big\}\, dv\Bigg\}
\one\{\bar{X}^{\kappa}(t-s)=zt\}\Bigg),
\end{equation}
where under $E_{ys}^{\otimes p}$ the process $\bar{X}^{\kappa}$ starts in $ys$. Abbreviate 
$\Xi^{\WN}_y(s,t)=\Xi^{\WN}_{y,y}(s,t)$. It is enough to show the existence of a concave and 
symmetric function $\alpha\colon\,\R^{dp}\to\R$ such that, for all compact $K\subset \R^{dp}$,
\begin{equation}
\label{eq:uniform}
\lim_{t\to\infty}\sup_{y\in Kt\cap \Z^{dp}}
\Big|\frac{1}{t}\log \Xi^{\WN}_{y/t}(0,t)-\alpha(y/t)\Big| =0.
\end{equation}
Indeed, suppose that such a function exists. A short computation shows that $\alpha$ obtains a 
global maximum at zero. Moreover, a standard large deviation estimate for the number of jumps 
of $\bar{X}^{\kappa}$ shows that there is a compact subset $K\subset\R^{dp}$ such that
\begin{equation}
\label{eq:negligible}
\limsup_{t\to\infty} \frac{1}{t}\log E_{0}^{\otimes p}\Bigg(\exp\Bigg\{
\sum_{1\leq i< j\leq p}\int_{0}^{t}\one\{X_i^{\kappa}(v)=X_j^{\kappa}(v)\}\, dv\Bigg\}
\one\{\bar{X}^{\kappa}([0,t])\subsetneq Kt\}\Bigg) \leq -1.
\end{equation}
Hence, given such a set $K$, it is enough to focus on the contribution coming from those 
random walk paths such that $\{\bar{X}^{\kappa}[0,t]\subseteq Kt\}$. Note that necessarily 
$0\in K$. Fix $\varepsilon>0$. By the approximation property of $\alpha$ in \eqref{eq:uniform} 
we can find a $t_0\geq 0$ such that, for all $t\geq t_0$,
\begin{equation}
\label{eq:compare}
\alpha(0)-\varepsilon \\
\leq \frac{1}{t}\log \sum_{ {y\in Kt} \atop  {yt\in\Z^{dp}} }\Xi^{\WN}_{y/t}(0,t) 
\leq \frac{1}{t}\log|Kt| + \alpha(0)+\varepsilon,
\end{equation}
which yields the desired claim.

The proof of the existence of $\alpha$ is divided into 3 Steps.

\medskip\noindent
{\bf Step 1:} We first show the existence of a function $\alpha\colon\,\Q^{dp}\to \R$ such that, 
for all $y\in\Q^{dp}$,
\begin{equation}
\label{eq:discretealpha}
\lim_{\substack{t\to\infty\\yt\in\Z^{dp}}}
\frac1t \log \Xi^{\WN}_y(0,t) = \alpha(y).
\end{equation} 
To that end, we fix $y\in\Q^{dp}$ and take $0\leq s <u< t$ such that $ys, yu, yt\in\Z^{dp}$. Forcing 
$\bar{X}^{\kappa}$ to be at position $yu$ at time $u-s$, an application of the Markov property of 
$\bar{X}^{\kappa}$ at time $u-s$ yields
\begin{equation}
\label{eq:superadd}
\Xi^{\WN}_y(s,t) \geq \Xi^{\WN}_y(s,u)\,\Xi^{\WN}_y(u,t).
\end{equation}
Consequently, $(s,t)\mapsto \log\Xi^{\WN}_y(s,t)$ is superadditive, and the claim in 
\eqref{eq:discretealpha} follows.

\medskip\noindent
{\bf Step 2:}
To extend $\alpha$ to a function on $\R^{dp}$ and to get uniform convergence on compacts as 
in \eqref{eq:uniform}, we show that for any compact subset $K\subset\R^{dp}$,
\begin{equation}
\label{eq:cont}
\lim_{\varepsilon\downarrow 0}\limsup_{t\to\infty}
\sup_{\substack{x,y\in K, xt,yt\in\Z^{dp}\\ \|x-y\|\leq \varepsilon}}
\frac1t |\log \Xi^{\WN}_x(0,t)-\log \Xi^{\WN}_y(0,t)|=0.
\end{equation}
To that end, we fix $\varepsilon >0$ and note that for all $t>0$ and all $y\in K$ such that $yt\in\Z^{dp}$,
\begin{equation}
\label{eq:sumchi}
\Xi^{\WN}_y(0,t) = \sum_{\substack{w\in\R^{dp} \\ w(1-\varepsilon)t\in\Z^{dp}}}
\Xi^{\WN}_w(0,(1-\varepsilon)t)\,\Xi^{\WN}_{w,y}((1-\varepsilon)t,t).
\end{equation}
Moreover, by standard large deviation estimates for the number of jumps for each component 
of $\bar{X}^{\kappa}$, it is possible to find an $R>0$ such that 
\begin{equation}
\label{eq:outsideBR}
\limsup_{t\to\infty}\frac1t\log \sup_{y\in K}
\sum_{\substack{w\notin B_R t \\ w(1-\varepsilon)t\in\Z^{dp}}}
\Xi^{\WN}_w(0,(1-\varepsilon)t)\,\,\Xi^{\WN}_{w,y}((1-\varepsilon)t,t) \leq -1,
\end{equation}
so that the main contribution to \eqref{eq:sumchi} comes from those $w$ such that $w\in B_R$. 
Here, $B_R$ denotes the box centered at the origin with radius $R$. Consequently, to conclude 
Step 2 it is enough to show that
\begin{equation}
\label{eq:continuity}
\lim_{\varepsilon\downarrow 0}\limsup_{t\to\infty}
\sup_{\substack{x,y\in K, xt,yt\in\Z^{dp}\\ ||x-y||\leq \varepsilon}}
\frac1t \Bigg|\log \frac{\sum_{w\in B_R t:  w(1-\varepsilon)t\in\Z^{dp}}
\Xi^{\WN}_w(0,(1-\varepsilon)t)\,\, \Xi^{\WN}_{w,x}((1-\varepsilon)t,t)}
{\sum_{w\in B_R t: w(1-\varepsilon)t\in\Z^{dp}}
\Xi^{\WN}_w(0,(1-\varepsilon)t)\,\, \Xi^{\WN}_{w,y}((1-\varepsilon)t,t)}\Bigg|=0.
\end{equation}
But this follows from the fact that the term appearing under the integral in the exponential 
in \eqref{eq:pmwn} is bounded, together with standard estimates on the random walk 
transition kernel. The details can be found in the proof of \cite[Lemma 4.3]{DGRS12}.

\medskip\noindent
{\bf Step 3:} 
Using the results in Steps 1--2, we can conclude the proof as in \cite{DGRS12}. We only
give a sketch. Because of \eqref{eq:cont}, $\alpha$ is continuous and hence can be 
extended to a continuous function $\alpha\colon\,\R^{dp}\to\R$. The uniform convergence 
in \eqref{eq:uniform} follows from \eqref{eq:cont} and a compactness argument. Clearly, 
$\alpha$ is symmetric, i.e., $\alpha(x)=\alpha(-x)$ for all $x\in\R^{dp}$, which is a 
consequence of the symmetry of $\xi$. It remains to show the concavity of $\alpha$. For 
that, fix $x,y\in\R^{dp}$, $\beta\in(0,1)$ and take sequences $(t_n)_{n\in\N}$, $(x_n)_{n\in\N}$,
$(y_n)_{n\in\N}$ such that $\lim_{n\to\infty} t_n=\infty$, $\lim_{n\to\infty} x_nt_n = x$, 
$\lim_{n\to\infty} y_nt_n = y$, and $\beta y_n t_n, (1-\beta)y_nt_n\in\Z^{dp}$ for all $n\in\N$. 
Then, constraining $\bar{X}^{\kappa}$ to be at position $\beta t_n y_n$ at time $\beta t_n$, we see 
that
\begin{equation}
\label{eq:inequchain}
\log \Xi^{\WN}_{\beta y_n+(1-\beta)x_n}(0,t_n) 
\geq \log \Xi^{\WN}_{ y_n}(0,\beta t_n) 
+ \log\Xi^{\WN}_{ y_n , \beta y_n+(1-\beta)x_n}(\beta t_n, t_n).
\end{equation}
The term in the left-hand side converges to $\alpha(\beta y+(1-\beta)x)$ after division by $t_n$, 
the first term in the right-hand side converges to $\alpha(y)$ after division by $\beta t_n$, while 
the second term in the right-hand side converges to $\alpha(x)$ after division by $(1-\beta)t_n$. 
This yields the existence of a function $\alpha$ as claimed in \eqref{eq:uniform}, and finishes 
the proof.
\end{proof}

\subsection{Dynamics {\bf (IIa)}}

\begin{proof}
For $0\leq s < t< \infty$ and $y,z\in \R^{d(n+p)}$ such that $ys,zt\in\Z^{d(n+p)}$, define
\begin{equation}
\label{eq:fsrw}
\Xi^{\FSRW}_{y,z}(s,t) =
(E^{\otimes n}\otimes E^{\otimes p})_{ys}\Bigg(\exp\Bigg\{
\sum_{i=1}^{p}\sum_{j=1}^{n}\int_{0}^{t-s}\one\{X_i^{\kappa}(v)=X_j^{\rho}(v)\}\, dv\Bigg\}
\one\{\bar{X}(t-s)=zt\}\Bigg),
\end{equation}
where $\bar{X}= (X_1^{\kappa},\ldots, X_{p}^{\kappa}, X_1^{\rho},\ldots, X_n^{\rho})$. The function $\alpha$ 
from \eqref{eq:uniform} is constructed on $\R^{d(n+p)}$ rather than on $\R^{dp}$. The construction is 
similar to that for dynamics {\bf (I)} and will therefore be omitted.
\end{proof}

\subsection{Dynamics {\bf (IIb)}}
\medskip

Recall that the dynamics starts from a Poisson random field on $\Z^d$ with intensity $\nu\in (0,\infty)$ and the representation derived in Section~\ref{S2.4.3}.
For the proof we distinguish between two cases. 

\medskip\noindent 
\textbf{Case: $p\geq 1/G(0)$.}

\begin{proof}
For this case it is known that $\lambda_p^{\one}(\kappa)=\infty$ for all choices of $\kappa$, and hence it is 
enough to show that $\lambda_0^{\delta_0}(\kappa)=\infty$. However, this is a simple consequence of 
\cite[Proposition 2.3 and Eq. (3.3)]{GdH06}. 
\end{proof} 

\medskip\noindent
\textbf{Case: $0 < p < 1/G(0)$.}

\begin{proof}
The proof works along similar lines as for {\bf (I)}. We only highlight the differences. For $0\leq s < t <\infty$ 
and $y,t\in\R^{dp}$ such that $ys,zt\in \Z^{dp}$, define
\begin{equation}
\label{eq:Xi2}
\Xi_{y,z}^{\ISRW}(s,t) = 
E_{ys}^{\otimes p}\Bigg(
\exp\Bigg\{\nu \sum_{i=1}^{p} \int_0^{t-s} w(X_i^{\kappa}(v),v)\, dv\Bigg\}\one\{\bar{X}^{\kappa}(t-s)=zt\}\Bigg),
\end{equation}
where $\bar{X}^{\kappa}= (X_1^{\kappa},\ldots, X_p^{\kappa})$ and the process $\bar{X}^{\kappa}$ 
starts at $ys$ under $E_{ys}^{\otimes p}$. Abbreviate $\Xi_y^{\ISRW}(s,t)$ $ = \Xi_{y,y}^{\ISRW}(s,t)$. 
It is again enough to establish a convergence similar to the one in \eqref{eq:uniform}, i.e., to show 
that there is a concave and symmetric function $\alpha\colon\,\R^{dp}\to \R$ such that, for all compact 
subsets $K\subset \R^{dp}$,
\begin{equation}
\label{eq:uniform2}
\lim_{t\to\infty} \sup_{y\in Kt\cap \Z^{dp}} 
\Big|\frac1t \log \Xi_{y/t}^{\ISRW}(0,t)-\alpha(y/t)\Big|=0.
\end{equation}
The proof comes in 3 Steps and is similar to the proof for {\bf (I)}.

\medskip\noindent
{\bf Step 1:}
Define the function $\alpha$ on $\Q^{dp}$ with the help of a superadditivity argument. To exhibit the 
dependence of the function $w$ on the trajectories $X_1^{\kappa},\ldots,X_p^{\kappa}$ we write
\begin{equation}
\label{eq:dep}
w(x,s) = w_{X_{1}^{\kappa}[0,t],\ldots, X_p^{\kappa}[0,t]}(x,s),\quad s\in[0,t].
\end{equation}
It was argued in \cite[Eq. (4.11)]{GdH06} that, for all $s,t\geq 0$,
\begin{equation}
\label{eq:partition}
w_{X_1^{\kappa}[0,s+t],\ldots, X_p^{\kappa}[0,t+s]}(x,u)
\left\{
\begin{array}{ll}
= w_{X_1^{\kappa}[0,s],\ldots, X_p^{\kappa}[0,s]}(x,u), & \mbox{for }u\in[0,s],\\
\geq w_{X_1^{\kappa}[s,s+t],\ldots, X_p^{\kappa}[s,s+t]}(x,u-s), &\mbox{for }u\in[s,s+t].
\end{array}
\right.
\end{equation}
Therefore the superadditivity of $(s,t)\mapsto \log \Xi_{y}^{\ISRW}(s,t)$ follows in a similar fashion 
as for {\bf (I)}. This yields the existence of $\alpha$ on $\Q^{dp}$.

\medskip\noindent
{\bf Step 2:}
As for {\bf (I)}, we want to show that, for any compact subset $K\subset \R^{dp}$,
\begin{equation}
\label{eq:cont2}
\lim_{\varepsilon\downarrow 0}\limsup_{t\to\infty}
\sup_{\substack{x,y\in K, xt,yt\in\Z^{d}\\ ||x-y||\leq \varepsilon}}
\frac1t |\log \Xi^{\ISRW}_x(0,t)-\log \Xi^{\ISRW}_y(0,t)|=0.
\end{equation}
The difference with {\bf (I)} is that we no longer have the same relation as in \eqref{eq:sumchi}. 
However, by the lines following \eqref{eq:w}, we have the bound $w(x,t)\leq \bar{w}(0,t)$ for all 
$x\in\Z^d$, $ t\geq 0$. Moreover, by \eqref{eq:wlimit}, the assumption $0<p<1/G(0)$ yields that 
$\bar{w}(0,t)$ is bounded. Hence, we can use large deviation arguments for the random walk 
to show that the main contribution to \eqref{eq:Xi2} comes from those random walk paths that 
stay until time $t$ inside a box of size $Rt$ for a suitable chosen value of $R$. Moreover, using 
that, for all $t\geq 0$, $\varepsilon\in (0,1)$ and $x,y\in\Z^{dp}$,
\begin{equation}
\label{eq:continuity2}
\begin{aligned}
&E_{0}^{\otimes p}\left(\one\{\bar{X}^{\kappa}((1-\varepsilon) t)=y\}\,
\exp\left\{\sum_{i=1}^{p}\int_0^{(1-\varepsilon) t}w(X_i^{\kappa}(v), v)\, dv\right\}
\right)P_y^{\otimes p}(\bar{X}^{\kappa}(t)= x)\\
&\qquad= E_{0}^{\otimes p}\left(\one\{\bar{X}^{\kappa}((1-\varepsilon) t)=y\}\,
\exp\left\{\sum_{i=1}^{p}\int_0^{(1-\varepsilon)t}w(X_i^{\kappa}(v), v)\, dv\right\}
\one\{\bar{X}^{\kappa}(t)= x\}\right)\\
&\qquad\leq E_{0}^{\otimes p}\left(\one\{\bar{X}^{\kappa}((1-\varepsilon) t)=y\}\,
\exp\left\{\sum_{i=1}^{p}\int_0^{t}w(X_i^{\kappa}(v), v)\, dv\right\}
\one\{\bar{X}^{\kappa}(t)= x\}\right)\\
&\qquad\leq e^{p^2tG(0)/(1-pG(0))}E_{0}^{\otimes p}\left(\one\{\bar{X}^{\kappa}((1-\varepsilon) t)=y\}\,
\exp\left\{\sum_{i=1}^{p}\int_0^{(1-\varepsilon) t}w(X_i^{\kappa}(v), v)\, dv\right\}
\right)\\
&\qquad\qquad\times P_y^{\otimes p}(\bar{X}^{\kappa}(t)= x),
\end{aligned}
\end{equation}
where we used \eqref{eq:wlimit} to obtain the last inequality and the relation~\eqref{eq:partition} 
was used throughout all inequalities in \eqref{eq:continuity2}. We can now proceed as for {\bf(I)}.

\medskip\noindent
{\bf Step 3:} 
This works almost verbatim as for {\bf (I)}. We omit the details. 
\end{proof}

\subsection{Dynamics {\bf (III)}}

\begin{proof}
The idea of the proof is the same as for {\bf (I)--(II)}, but some additional technical difficulties 
arise. Write $\EE_{\mu,x}^{\otimes p}= \EE_{\mu}\otimes E_x^{\otimes p}$ for the expectation 
when $(\xi,\bar{X}^{\kappa})$, with $\bar{X}^{\kappa}=(X_1^{\kappa},\ldots, X_p^{\kappa})$ 
a collection of $p$ indendent simple random walks jumping at rate $2d\kappa$, has initial 
distribution $(\mu,\delta_x)$. For $0\leq s <t <\infty$ and $y,z \in \R^{dp}$ such that $ys,zt
\in\Z^{dp}$, define, similarly as in \eqref{eq:Xi},
\begin{equation}
\label{eq:Xi3}
\Xi_{y,z}^{\Sp}(s,t) = \EE_{\mu,ys}^{\otimes}\Bigg(
\exp\Bigg\{\sum_{i=1}^{p}\int_0^{t-s}\xi(X_i^{\kappa}(v),v)\, dv\Bigg\}
\one\{\bar{X}^{\kappa}(t-s)= zt\}\Bigg),
\end{equation}
and write $\Xi_{y}^{\Sp}(s,t)=\Xi_{y,y}^{\Sp}(s,t)$. As for {\bf (I)}, it is enough to show the 
existence of a function $\alpha\colon\,\R^{dp}\to\R$ such that, for all compact subsets 
$K\subset \R^{dp}$,
\begin{equation}
\label{eq:alpha}
\lim_{t\to\infty}\sup_{y\in Kt\cap\Z^d}\Big|\frac1t\log\Xi_{y/t}^{\Sp}(0,t)-\alpha(y/t)\Big|=0.
\end{equation}
The proof comes in 3 Steps.

\medskip\noindent
{\bf Step 1:} 
We first show the existence of a function $\alpha\colon\,\Q^{dp}\to\R$ such that
\begin{equation}
\label{eq:discretealpha3}
\lim_{\substack{t\to\infty\\ yt\in\Z^{dp}}}\frac1t\log\Xi_{y}^{\Sp}(0,t)=\alpha(y).
\end{equation}
The idea is again to establish the superadditivity of $(s,t)\mapsto \log\Xi_{y}^{\Sp}(s,t)$ for all 
$y\in\Q^{dp}$ such that $ys,yt\in\Z^{dp}$. In the present context, however, this is a bit more tricky 
than before, which is why we provide the details. Fix $y\in\Q^{dp}$, and take $0\leq s < u<t <\infty$ 
such that $ys,yu,yt\in\Z^{dp}$. Constraining the random walk $\bar{X}^{\kappa}$ to be at position $yu$ at 
time $u-s$, we can use the strong Markov property of $(\xi,\bar{X}^{\kappa})$ at time $u-s$ to get
\begin{equation}
\label{eq:smarkov}
\Xi_{y}^{\Sp}(s,t) \geq \EE_{\mu,ys}^{\otimes p}\bigg(\cE(yu,u-s)\,
\EE_{\xi_{u-s},yu}^{\otimes p}\bigg(\cE(yt,t-u)\bigg)\bigg),
\end{equation}
where we abbreviate
\begin{equation}
\label{eq:abbrev1}
\cE(y,t) = \exp\bigg\{\sum_{i=1}^{p}\int_0^t\xi(X_i^{\kappa}(v),v)\, dv\bigg\}\one\{\bar{X}^{\kappa}(t)=y\},
\quad t\geq 0,\, y\in\Z^{dp}.
\end{equation}
Expanding the exponentials, we may rewrite the right-hand side of \eqref{eq:smarkov} as
\begin{equation}
\label{eq:exp}
\begin{aligned}
\sum_{n,m\in\N_0}^{\infty}\frac{1}{n!}\frac{1}{m!}\bigg(\prod_{j=1}^{n}
&\int_0^{u-s}ds_j^{(1)}\bigg)\bigg(\prod_{k=1}^{m}\int_0^{t-u}ds_k^{(2)}\bigg)\\
&\times\EE_{\mu,ys}^{\otimes p}\Big(\cH\big(yu,s_1^{(1)},\ldots, s_n^{(n)};u-s\big)\,
\EE_{\xi_{u-s},yu}^{\otimes p}\Big(\cH\big(yt,s_{1}^{(2)},\ldots, s_{m}^{(2)};t-u\big)\Big)\Big),
\end{aligned}
\end{equation}
where 
\begin{equation}
\label{eq:H}
\cH(y,s_1,\ldots, s_n;t) = \prod_{j=1}^{n}\bigg[\sum_{i=1}^{p}\xi(X_i^{\kappa}(s_j),s_j)\bigg]\,
\one\{\bar{X}^{\kappa}(t)=y\},\quad
n\in\N,\, t, s_1,\ldots, s_n\geq 0,\, y\in\Z^{dp}.
\end{equation}
Note that by the non-negativity of $\xi$, for all $n\in\N$, $yu\in\Z^{dp}$, 
$s_1,\ldots, s_n$, $u-s\geq 0$, 
\begin{equation}
\label{eq:H2}
\cH\big(yu,s_1,\ldots, s_n;u-s\big)
\end{equation}
is a non-decreasing function of the $np$-tuple $(\xi(X_i^{\kappa}(s_j),s_j),\, 1\leq i\leq p,\, 
1\leq j\leq n)$. Hence, the attractiveness of $\xi$ implies that for all $m\in\N$, $yt\in\Z^{dp}$, 
$s_1,\ldots, s_m$, 
$t-u\geq 0$,
\begin{equation}
\label{eq:expH}
\EE_{\xi_{u-s},yu}^{\otimes p}\Big(\cH\big(yt,s_{1},\ldots, s_{m};t-u\big)\Big)
\end{equation}
is a non-decreasing function of $\xi_{u-s}$. Therefore, since $\xi$ is positively correlated
(recall \eqref{eq:poscor}), Liggett~\cite[Corollary 2.21, Section II.2]{L85} yields that
\begin{equation}
\label{eq:Liggett}
\begin{aligned}
\EE_{\mu,ys}^{\otimes p}
&\Big(\cH\big(yu,s_1^{(1)},\ldots, s_n^{(n)};u-s\big)\,\EE_{\xi_{u-s},yu}^{\otimes p}
\Big(\cH\big(yt,s_{1}^{(2)},\ldots, s_{m}^{(2)};t-u\big)\Big)\Big)\\
&= E_{ys}^{\otimes p}\Big[\EE_{\mu}\Big(\cH\big(yu,s_1^{(1)},\ldots, s_n^{(1)};u-s\big)\,
\EE_{\xi_{u-s},yu}^{\otimes p}\Big(\cH\big(yt,s_1^{(2)},\ldots,s_m^{(2)};t-u\big)\Big)\Big)\Big]\\
&\geq E_{yu}^{\otimes p}\Big[\EE_{\mu}\Big(\cH\big(yu,s_1^{(1)},\ldots, s_n^{(1)};u-s\big)\Big)\,
\EE_{\mu\xi_{u-s},yu}^{\otimes p}\Big(\cH\big(yt,s_1^{(2)},\ldots,s_m^{(2)};t-u\big)\Big)\Big],
\end{aligned}
\end{equation}
where $\mu\xi_{u-s}$ is the distribution of $\xi$ at time $u-s$ when $\xi$ starts from $\mu$. 
But $\mu$ is an invariant measure, and so this distribution equals $\mu$. Consequently, the 
right-hand side of \eqref{eq:Liggett} becomes
\begin{equation}
\label{eq:Liggett2}
\EE_{\mu,ys}^{\otimes p}\Big(\cH\big(yu,s_1^{(1)},\ldots, s_n^{(1)};u-s\big)\Big)\,
\EE_{\mu,yu}^{\otimes p}\Big(\cH\big(yt,s_1^{(2)},\ldots,s_m^{(2)};t-u\big)\Big).
\end{equation}
Substituting \eqref{eq:Liggett2} back into \eqref{eq:exp}, we see that
\begin{equation}
\label{eq:superadd2}
\Xi_{y}^{\Sp}(s,t)\geq \Xi_y^{\Sp}(s,u)\,\Xi_y^{\Sp}(u,t),
\end{equation}
from which the existence of $\alpha$ follows.

\medskip\noindent
{\bf Step 2:} 
As in the proof for {\bf (I)}, we want to establish that, for any compact subset $K\subset \R^{d}$,
\begin{equation}
\label{eq:cont3}
\lim_{\varepsilon\downarrow 0}\limsup_{t\to\infty}
\sup_{\substack{x,y\in K, xt,yt\in\Z^{d}\\ ||x-y||\leq \varepsilon}}
\frac1t |\log \Xi^{\Sp}_x(0,t)-\log \Xi^{\Sp}_y(0,t)|=0.
\end{equation}
The difference with {\bf (I)} is that we no longer have the same relation as in \eqref{eq:sumchi}. 
However, because of the boundedness of $\xi$, we can use a large deviation argument for the 
random walk to show that the main contribution to \eqref{eq:Xi3} comes from those random walk 
paths that stay until time $t$ inside a box of size $Rt$ for a suitable chosen value of $R$. Moreover, using 
that, for all $t\geq 0$, $\varepsilon\in(0,1)$ and $w,x\in\Z^{dp}$,
\begin{equation}
\label{eq:continuity3}
\begin{aligned}
&\EE_{\mu,0}^{\otimes p}\left(\one\{\bar{X}^{\kappa}((1-\varepsilon) t)=w\}\,
\exp\left\{\sum_{i=1}^{p}\int_0^{(1-\varepsilon) t}\xi(X_i^{\kappa}(v), v)\, dv\right\}
\right)P_w^{\otimes p}(\bar{X}^{\kappa}(t)= x)\\
&\qquad= \EE_{\mu,0}^{\otimes p}\left(\one\{\bar{X}^{\kappa}((1-\varepsilon) t)=w\}\,
\exp\left\{\sum_{i=1}^{p}\int_0^{(1-\varepsilon)t}\xi(X_i^{\kappa}(v), v)\, dv\right\}
\one\{\bar{X}^{\kappa}(t)= x\}\right)\\
&\qquad\leq\EE_{\mu,0}^{\otimes p}\left(\one\{\bar{X}^{\kappa}((1-\varepsilon) t)=w\}\,
\exp\left\{\sum_{i=1}^{p}\int_0^{t}\xi(X_i^{\kappa}(v), v)\, dv\right\}
\one\{\bar{X}^{\kappa}(t)= x\}\right)\\
&\qquad\leq e^{p\varepsilon t}\EE_{\mu,0}^{\otimes p}\left(\one\{\bar{X}^{\kappa}((1-\varepsilon) t)=w\}\,
\exp\left\{\sum_{i=1}^{p}\int_0^{(1-\varepsilon) t}\xi(X_i^{\kappa}(v), v)\, dv\right\}
\right)P_w^{\otimes p}(\bar{X}^{\kappa}(t)= x),
\end{aligned}
\end{equation}
we can finish the proof as for {\bf (I)}.

\medskip\noindent
{\bf Step 3:}
Use the techniques from Step 1 to proceed in a similar manner as in Step 3 for {\bf (I)}. 
We omit the details.
\end{proof}

\section{A technical lemma}
\label{SB}

The following lemma was used in Section~\ref{S2.4.3}.

\begin{lemma}
\label{lem:B}
Let $L$ be the generator of the dynamics in {\bf (IIb)}. For $N\in\N$, define $V_N\colon\,\N_0^{\Z^d}
\times\Z^d\to\R$ by $V_N(\eta,x) = \eta(x) \wedge N$ (recall \eqref{VNdef}), and let $P_t^{V_N}$ 
be the semigroup of $\cL^{V_N} = L+\Delta^{\cK} + V_N$. Then for every $t>0$ there is a 
$g\in L^1(\N_0^{\Z^d}\times\Z^d,\mu\otimes m)$ such that, for all $\eta\in \N_0^{\Z^d}$ and 
$y\in\Z^d$,
\begin{equation}
\label{eq:B}
\left|\left(\cL^{V_N}P_t^{V_N}\bar{f}\right)(\eta,y)\times
\left(P_t^{V_N}\bar{f}\right)(\eta,y)\right| \leq g(\eta,y)
\end{equation}
locally uniformly in $t$. Here, for $R>0$, $\bar{f}(\eta, y)=\one\{y\in B_R\}$ and $B_R$ is the 
box centered around the origin with radius $R$.
\end{lemma}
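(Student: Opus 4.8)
The plan is to work from the Feynman--Kac representation \eqref{eq:semigr} of $v_t:=P_t^{V_N}\bar f$,
\[
v_t(\eta,y)=\EE_{\eta,y}\Big(\exp\Big\{\int_0^t V_N\big(\xi_s,X^{\cK}(s)\big)\,ds\Big\}\,\one\{X^{\cK}(t)\in B_R\}\Big),
\]
and to bound the three pieces of $\cL^{V_N}v_t=Lv_t+\Delta^{\cK}v_t+V_Nv_t$ separately. From $V_N\le N$ (see \eqref{VNdef}) one gets $v_t(\eta,y)\le e^{Nt}$, but since $m$ is counting measure a bound decaying in $y$ is needed; forcing the walk onto $B_R$ yields the sharper estimate $v_t(\eta,y)\le e^{Nt}h_t(y)$ with $h_t(y):=P_y(X^{\cK}(t)\in B_R)$, which is $\eta$-independent and, by symmetry of the kernel $p^{\cK}_t$ (a consequence of $\cK(x,y)=\cK(y,x)$ in \eqref{staterg}), satisfies $\sum_{y\in\Z^d}h_t(y)=|B_R|<\infty$. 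The terms $V_Nv_t$ and $\Delta^{\cK}v_t$ are then routine: $|V_N(\eta,y)v_t(\eta,y)|\le Ne^{Nt}h_t(y)$, and $|\Delta^{\cK}v_t(\eta,y)|\le Ce^{Nt}\big(2d\,h_t(y)+\sum_{z\sim y}h_t(z)\big)$ by \eqref{staterg}; after multiplying by $v_t(\eta,y)\le e^{Nt}h_t(y)$ both are dominated by $C_t\big(h_t(y)+\sum_{z\sim y}h_t(z)\big)$, which is $\mu\otimes m$-integrable because $\sum_y\big(h_t(y)+\sum_{z\sim y}h_t(z)\big)=(1+2d)|B_R|$.

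The only real work is the term $Lv_t$, since $L$ has unbounded jump rates and $\sum_x\eta(x)=\infty$ $\mu$-a.s.; here $(Lv_t)(\eta,y)=\sum_x\sum_{y'\sim x}\eta(x)\big[v_t(\eta^{x,y'},y)-v_t(\eta,y)\big]$, and I would exploit that swapping the initial environment at a far-away site changes $v_t$ only a little. Using the standard basic coupling of the dynamics started from $\eta$ and from $\eta^{x,y'}$ (cf.~\cite{A82}), where all common particles move together and the single discrepancy particle runs as two rate-$2d$ simple random walks $Z$ (from $x$) and $Z'$ (from $y'$) coupled independently of each other and of $X^{\cK}$, the configurations $\xi^{\eta}_s$ and $\xi^{\eta^{x,y'}}_s$ agree off $\{Z_s,Z'_s\}$, so $|V_N(\xi^{\eta}_s,X^{\cK}(s))-V_N(\xi^{\eta^{x,y'}}_s,X^{\cK}(s))|\le\one\{Z_s=X^{\cK}(s)\}+\one\{Z'_s=X^{\cK}(s)\}$ by $|a\wedge N-b\wedge N|\le|a-b|$. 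Since both time-integrals lie in $[0,Nt]$ and $|e^a-e^b|\le e^{Nt}|a-b|$, taking expectations (the factor $\one\{X^{\cK}(t)\in B_R\}$ being common to both) gives
\[
\big|v_t(\eta^{x,y'},y)-v_t(\eta,y)\big|\le t\,e^{Nt}\big[\phi_t(\|x-y\|_1)+\phi_t(\|y'-y\|_1)\big],
\]
where $\|\cdot\|_1$ is the $\ell^1$-distance and $\phi_t(n):=1\wedge\big((2d(C+1)t)^n/n!\big)$ is the Poisson-tail bound for the number of jumps of the difference walk $X^{\cK}-Z$ (total jump rate $\le 2d(C+1)$ by \eqref{staterg}), which must exceed $n$ before the two walks, started $\ell^1$-distance $n$ apart, can collide. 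Summing over $x$ and over $y'\sim x$ yields $|(Lv_t)(\eta,y)|\le t\,e^{Nt}\sum_{x}\eta(x)\Phi_t(x-y)$ with $\Phi_t(z):=2d\,\phi_t(\|z\|_1)+\sum_{z'\sim z}\phi_t(\|z'\|_1)$, and $\sum_{z\in\Z^d}\Phi_t(z)=4d\sum_z\phi_t(\|z\|_1)<\infty$ since $|\{z:\|z\|_1=n\}|$ grows only polynomially in $n$ whereas $\phi_t(n)$ eventually decays faster than any exponential.

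Combining the three bounds, I would set
\[
g(\eta,y):=C_t\Big(h_t(y)\big[1+\eta(y)+\sum_x\eta(x)\Phi_t(x-y)\big]+\sum_{z\sim y}h_t(z)\Big)
\]
with $C_t$ absorbing the constants $te^{2Nt}$, $N$, $C$, $d$; then $|(\cL^{V_N}P_t^{V_N}\bar f)(\eta,y)\,(P_t^{V_N}\bar f)(\eta,y)|\le g(\eta,y)$, and $\int g\,d(\mu\otimes m)<\infty$ follows from $\EE_\mu(\eta(x))=\nu$, $\sum_z\Phi_t(z)<\infty$ and $\sum_y h_t(y)=|B_R|$. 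Local uniformity in $t$ is obtained by replacing $C_t$, the function $\phi_t$ (non-decreasing in $t$) and $h_t$ by $t$-uniform majorants over a fixed bounded interval around the given $t_0$, the super-exponential decay of the Poisson tail keeping the relevant series summable. The main obstacle is precisely the term $Lv_t$: one must quantify the sensitivity of the Feynman--Kac functional to a localized change of an \emph{infinite} initial environment, with an error that stays summable over the location of the change after the weight $\eta(x)$ is applied; the coupling turns this into a two-walk collision probability and the Poisson large-deviation estimate supplies the needed decay in $\|x-y\|_1$, after which everything is bookkeeping.
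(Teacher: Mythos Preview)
Your proof is correct and follows essentially the same strategy as the paper: bound $v_t(\eta,y)\le e^{Nt}P_y(X^{\cK}(t)\in B_R)$, handle $\Delta^{\cK}v_t$ and $V_Nv_t$ trivially, and control $|v_t(\eta^{x,y'},y)-v_t(\eta,y)|$ via a coupling that reduces the difference to a two-walk collision probability, then finish with a Poisson-tail large-deviation estimate. The only differences are cosmetic: the paper couples the discrepancy as a \emph{single} walk $Y_x$ and its shift $Y_x+z$ (so the two defect sites stay at distance $1$), and it retains the event $\{X^{\cK}(t)=0\}$ inside the collision probability, which forces a Cauchy--Schwarz step to decouple collision from arrival; your choice to drop that constraint and bring in the decay in $y$ through the separate factor $v_t(\eta,y)\le e^{Nt}h_t(y)$ is slightly cleaner and avoids Cauchy--Schwarz altogether.
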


\begin{proof}
We may assume that $t\geq 1$, which we do for notational convenience. It is straightforward to 
show that the statement is true when $\cL^{V_N}= L+\Delta^{\cK}+V_N$ in \eqref{eq:B} is 
replaced by $\Delta^{\cK}+V_N$. Furthermore, since $B_R$ is a finite set, it is enough to show that
\begin{equation}
\label{eq:B2}
\left|\left(L P_t^{V_N}\delta_w\right)(\eta,y)\times\left(P_t^{V_N}\delta_v\right)(\eta,y)\right|
\leq g(\eta,y)
\end{equation}
for any $v,w\in\Z^d$. For notational convenience we assume that $v=w=0$. The general case follows 
in a similar manner. Note that by the definition of $L$ in \eqref{eq:genISRW} we see that, for all $t>0$ 
and $(\eta,y)\in \N^{\Z^d}\times\Z^d$,
\begin{equation}
\label{eq:Bgenest}
\begin{aligned}
&\left|\left(L P_t^{V_N}\delta_0\right)(\eta,y)\times\left(P_t^{V_N}\delta_0\right)(\eta,y)\right|\\
&\quad \leq \sum_{x\in\Z^d} \,\,\sum_{z\colon\|z\|=1} \eta(x)
\,\left|\left(P_t^{V_N}\delta_0\right)(\eta^{x,x+z},y)-\left(P_t^{V_{N}}\delta_0\right)(\eta,y)\right|
\,e^{Nt}\,P_y(X_t^{\cK}=0).
\end{aligned}
\end{equation}
To estimate the difference $|(P_t^{V_{N}}\delta_0)(\eta^{x,x+z},y)-(P_t^{V_{N}}\delta_0)(\eta,y)|$, 
we introduce the following coupling.

Let $x\in\Z^d$ such that $\eta(x)\geq 1$, and let $\xi$ be {\bf (IIb)} started in $\eta$ and 
$\xi^{x,x+z}$ be {\bf (IIb)} started in $\eta^{x,x+z}$. Note that both systems start with the 
same number of simple random walks. Let $Y_x$ be simple random walk with jump rate 
$2d$ started from $x$. We can couple $\xi$ and $\xi^{x,x+z}$ such that, for all $w\in\Z^d$ 
and $t\geq 0$,
\begin{equation}
\label{eq:couplexi}
\xi^{x,x+z}(w,s) =
\left\{ 
\begin{array}{ll}
\xi(w,s), &w\neq Y_x(s), Y_x(s)+z,\\
\xi(w,s)-1, &w=Y_x(s),\\
\xi(w,s)+1, &w=Y_x(s)+z.
\end{array}\right.
\end{equation}
With this coupling at hand, we see that
\begin{equation}
\label{eq:Bdifest}
\begin{aligned}
&\left|\left(P_t^{V_{N}}\delta_0\right)(\eta^{x,x+z},y)
-\left(P_t^{V_{N}}\delta_0\right)(\eta,y)\right|\\
&\quad\leq 2e^{Nt}P_{y,x}(\exists s\in[0,t]\colon\, X^{\cK}(s)\in \{Y_x(s),Y_x(s)+z\},
X^{\cK}(t)=0),
\end{aligned}
\end{equation}
where $P_{y,x}$ denotes the product measure of $(X^{\cK},Y_x)$ started from $(y,x)$.
Combining \eqref{eq:Bgenest} and \eqref{eq:Bdifest}, we see that
\begin{equation}
\label{eq:Bgenestfin}
\begin{aligned}
&\sup_{s\in[t-1,t+1]} \left|\left(L P_s^{V_N}\delta_0\right)(\eta,y)
\left(P_s^{V_N}\delta_0\right)(\eta,y)\right|\\
&\quad\leq 2e^{2N(t+1)}\\
&\quad\times\sum_{x\in\Z^d}\,\,\sum_{z\colon\|z\|=1}\eta(x)
P_{y,x}\Big(\exists\, s\in[0,t+1]\colon\, \\
&\qquad\qquad  X^{\cK}(s) \in \{Y_x(s),Y_x(s)+z\}, 
0\in X^{\cK}([t-1,t+1])\Big)\\
&\quad \times P_y\left(\exists\, s\in[t-1,t+1]\colon\, X^{\cK}(s)=0\right).
\end{aligned}
\end{equation}

We complete the proof by showing that the right-hand side of \eqref{eq:Bgenestfin} is 
in $L^1(\N_0^{\Z^d}\times\Z^d,\mu\otimes m)$. To see why, note that integration of the 
right-hand side of \eqref{eq:Bgenestfin} over $\eta\in \N_0^{\Z^d}$ yields the upper bound
\begin{equation}
\label{eq:BL1}
\begin{aligned}
2\sum_{x\in\Z^d}\,\,\sum_{z\colon\|z\|=1} \nu 
&P_{y,x}\left(\exists s\in[0,t+1]\colon\, X^{\cK}(s)\in \{Y_x(s),Y_x(s)+z\},
0\in X^{\cK}([t-1,t+1])\right)\\
&\qquad \times P_y\left(\exists\, s\in[t-1,t+1]\colon\, X^{\cK}(s)=0\right).
\end{aligned}
\end{equation}
Summing the right-hand side of \eqref{eq:BL1} over $y\in\Z^d$, we get
\begin{equation}
\label{eq:BL1est}
\begin{aligned}
&2 e^{2N(t+1)}\, \nu \sum_{x,y\in\Z^d} 
P_{y,x}\left(\exists s\in[0,t+1]\colon\,||X^{\cK}(s)-Y_x(s)||=1, 0\in X^{\cK}([t-1,t+1])\right)\\
&\qquad\qquad\times P_y\left(\exists\, s\in[t-1,t+1]\colon\, X^{\cK}(s)=0\right)
\leq I + II,
\end{aligned}
\end{equation}
where 
\begin{equation}
\label{eq:BIandII}
\begin{aligned}
I &= 2 e^{2N(t+1)}\,\nu \sum_{y\in\Z^d} 
P_y\left(\exists\, s\in[t-1,t+1]\colon\, X^{\cK}(s)=0\right),\\
II &= 2 e^{2N(t+1)}\,\nu \sum_{x\neq y} 
P_{y,x}\left(\exists s\in[0,t+1]\colon\, \|X^{\cK}(s)-Y_x(s)\|=1,0\in X^{\cK}([t-1,t+1])\right).
\end{aligned}
\end{equation}
The first property in \eqref{staterg} combined with standard large deviations estimates shows that 
$I$ is finite. To see that $II$ is finite, note that by the Cauchy-Schwarz inequality,
\begin{equation}
\label{eq:BII}
\begin{aligned}
II \leq 2 e^{2N(t+1)} \sum_{y\in\Z^d} \sum_{k\in\N} \,\, \sum_{x\colon\|x-y\|=k} 
&\sqrt{P_{y,x}\Big(\exists s\in[0,t+1]\colon
\|X^{\cK}(s)-Y_x(s)\|=1\Big)}\\
&\times\sqrt{P_y\Big(0\in X^{\cK}([t-1,t+1])\Big)}.
\end{aligned}
\end{equation}
To proceed, note that for any $y\in\Z^d$,
 \begin{equation}
\label{eq:BIIinner}
\begin{aligned}
 \sum_{k\in\N} \,\, \sum_{x\colon\|x-y\|=k} 
&\sqrt{P_{y,x}\Big(\exists s\in[0,t+1]\colon
\|X^{\cK}(s)-Y_x(s)\|=1\Big)}\\
&\leq \sum_{k\in\N} \,\, \sum_{x\colon\|x-y\|=k} 
\sqrt{P_{y,x}\Big(N(X^\mathcal{K}-Y_x,t+1)\geq k-1\Big)}.
\end{aligned}
\end{equation}
Here, $N(X^{\cK}-Y_{x},t+1)$ denotes the number of jumps of $X^{\cK}-Y_{x}$. Thus, the 
first property in \eqref{staterg} combined with standard large deviation estimates shows 
that the sum in \eqref{eq:BIIinner} is bounded uniformly in $y$. To conclude, use that 
\begin{equation}
y\mapsto \sqrt{P_y(0\in X^{\cK}([t-1,t+1]))} 
\end{equation}
decays faster than exponential in $\|y\|$. This implies that $II$ is finite, and shows 
that the right-hand side of \eqref{eq:Bgenestfin} is in $L^1(\N_0^{\Z^d}\times\Z^d,\mu\otimes m)$.
\end{proof}


\end{document}